\renewcommand*{\backref}[1]{}
\renewcommand*{\backrefalt}[4]{
  \ifcase #1 %
   [No citations.]%
  \or
   [#2]%
  \else
   [#2]%
  \fi
}
\newcommand{\calB}{\mathcal{B}}
\newcommand{\calC}{\mathcal{C}}
\newcommand{\calM}{\mathcal{M}}
\newcommand{\calX}{\mathcal{X}}
\newcommand{\calY}{\mathcal{Y}}
\newcommand{\Ann}{\mathbb{A}}
\newcommand{\DD}{\mathbb{D}}
\newcommand{\NN}{\mathbb{N}}
\newcommand{\Sph}{\mathbb{S}} 
\newcommand{\TT}{\mathbb{T}}
\newcommand{\ZZ}{\mathbb{Z}}
\renewcommand{\setminus}{{\smallsetminus}}
\newcommand{\rank}{\operatorname{rank}}
\newcommand{\image}{\operatorname{image}}
\newcommand{\st}{\mathbin{\mid}} 
\newcommand{\from}{\colon} 
\newcommand{\isom}{\cong} 
\newcommand{\bdy}{\partial} 
\newcommand{\diam}{\operatorname{diam}} 
\newcommand{\genus}{\operatorname{genus}}
\newcommand{\MCG}{\mathcal{MCG}} 
\newcommand{\Aut}{\operatorname{Aut}} 
\newcommand{\GL}{\operatorname{GL}} 
\newcommand{\PGL}{\operatorname{PGL}} 
\newcommand{\PMF}{\mathcal{PMF}} 
\newcommand{\FL}{\mathcal{FL}} 
\newcommand{\EL}{\mathcal{EL}} 
\newcommand{\PML}{\mathcal{PML}}  
\newcommand{\Teich}{{Teichm\"uller~}}
\newcommand{\refapp}[1]{Appendix~\ref{App:#1}}
\newcommand{\refthm}[1]{Theorem~\ref{Thm:#1}}
\newcommand{\refcor}[1]{Corollary~\ref{Cor:#1}}
\newcommand{\reflem}[1]{Lemma~\ref{Lem:#1}}
\newcommand{\refprop}[1]{Proposition~\ref{Prop:#1}}
\newcommand{\refrem}[1]{Remark~\ref{Rem:#1}}
\newcommand{\reffig}[1]{Figure~\ref{Fig:#1}}
\newcommand{\refdef}[1]{Definition~\ref{Def:#1}}
\newcommand{\refeqn}[1]{Equation~\ref{Eqn:#1}}
\theoremstyle{plain}
\numberwithin{equation}{section} 
\newtheorem{theorem}[equation]{Theorem}
\newtheorem{corollary}[equation]{Corollary}
\newtheorem{lemma}[equation]{Lemma}
\newtheorem{proposition}[equation]{Proposition}
\theoremstyle{definition}
\newtheorem{definition}[equation]{Definition}
\newtheorem{remark}[equation]{Remark}
\newtheorem*{remark*}{Remark}
\newtheorem{claim}[equation]{Claim}
\newtheorem*{claim*}{Claim}
\newtheorem*{question*}{Question}
\newtheorem*{answer*}{Answer}
\newtheorem*{case*}{Case}
\newtheorem*{application*}{Application}
\theoremstyle{definition}
\newcommand{\fakeenv}{} 
\newenvironment{restate}[2]  
{ 
 \renewcommand{\fakeenv}{#2} 
 \theoremstyle{plain} 
 \newtheorem*{\fakeenv}{#1~\ref{#2}} 
 \begin{\fakeenv}
}
{
 \end{\fakeenv}
}
\newcommand{\radius}{{\sf r}}
\newcommand{\cobound}{{\sf c}}
\renewcommand{\image}{{\sf c_0}}
\newcommand{\tight}{{\sf c_1}}
\newcommand{\completion}{{\sf c_2}}
\newcommand{\extension}{{\sf c_3}}
\newcommand{\Cobound}{{\sf C}}
\newcommand{\g}{{\sf g}}
\newcommand{\boundary}{{\sf b}}
\newcommand{\quasi}{{\sf q}}
\newcommand{\Quasi}{{\sf Q}}
\newcommand{\stab}{{\sf M}}
\newcommand{\elementary}{{\sf e}}
\newcommand{\distance}{{\sf d}}
\newcommand{\error}{{\sf K}}
\newcommand{\fancy}{{\sf H}}
\newcommand{\gprod}[1]{\langle #1 \rangle}
\newcommand{\base}{\operatorname{base}}
\newcommand{\QI}{\operatorname{QI}}
\begin{document}

\title{Curve complexes are rigid}

\author{Kasra Rafi}
\email{rafi@math.uchicago.edu}
\author{Saul Schleimer}
\email{s.schleimer@warwick.ac.uk}

\thanks{This work is in the public domain.}

\date{\today}

\begin{abstract}  
Any quasi-isometry of the complex of curves is bounded distance from a
simplicial automorphism.  As a consequence, the quasi-isometry type of
the curve complex determines the homeomorphism type of the surface.
\end{abstract}


\maketitle

\section{Introduction}
\label{Sec:Intro}

The {\em curve complex} of a surface was introduced into the study of
\Teich space by Harvey~\cite{Harvey81} as an analogue of the Tits
building of a symmetric space.  Since then the curve complex has
played a key role in many areas of geometric topology such as the
classification of infinite volume hyperbolic three-manifolds, the
study of the cohomology of mapping class groups, the geometry of
\Teich space, and the combinatorics of Heegaard splittings.

Our motivation is the work of Masur and Minsky~\cite{MasurMinsky99,
MasurMinsky00} which focuses on the coarse geometric structure of the
curve complex, the mapping class group, and other combinatorial moduli
spaces.  It is a sign of the richness of low-dimensional topology that
the geometric structure of such objects is not well understood.

Suppose that $S = S_{\g, \boundary}$ is an orientable, connected,
compact surface with genus $\g$ and $\boundary$ boundary components.
Define the {\em complexity} of $S$ to be $\xi(S)= 3 \g -3 +
\boundary$.  Let $\calC(S)$ be the curve complex of $S$.  Our main
theorem is:

\begin{restate}{Theorem}{Thm:Rigid}
Suppose that $\xi(S) \geq 2$. Then every quasi-isometry of $\calC(S)$
is bounded distance from a simplicial automorphism of $\calC(S)$.
\end{restate}

Before discussing the sharpness of \refthm{Rigid} recall the
definition of $\QI(\calX)$.  This is the group of quasi-isometries of
a geodesic metric space $\calX$, modulo an equivalence relation;
quasi-isometries $f$ and $g$ are equivalent if and only if there is a
constant $\distance$ so that for every $x \in \calX$ we have
$d_\calX(f(x), g(x)) \leq \distance$.  Define $\Aut(\calC(S))$ to be
the group of simplicial automorphisms of $\calC(S)$; notice that these
are always isometries.  From \refthm{Rigid} deduce:

\begin{corollary}
\label{Cor:Auto}
Suppose that $\xi(S) \geq 2$.  Then the natural map 
\[
\Aut(\calC(S)) \to \QI(\calC(S))
\] 
is an isomorphism.
\end{corollary}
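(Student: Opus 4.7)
The homomorphism $\Aut(\calC(S)) \to \QI(\calC(S))$ sends each simplicial automorphism, viewed as an isometry of $\calC(S)$, to its quasi-isometry class. Surjectivity is immediate from Theorem~\ref{Thm:Rigid}: for any quasi-isometry $f$ the theorem provides some $\phi \in \Aut(\calC(S))$ at bounded distance from $f$, so that $[f] = [\phi]$ in $\QI(\calC(S))$. The remaining work is injectivity.

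Injectivity reduces to the claim that any $\phi \in \Aut(\calC(S))$ at bounded distance from $\Id$ equals $\Id$; applied to $\phi_2^{-1} \circ \phi_1$ for two automorphisms with the same quasi-isometry class, this yields $\phi_1 = \phi_2$. To prove the reduced claim, I would use that $\calC(S)$ is Gromov hyperbolic (by Masur--Minsky), so an isometry with bounded displacement must fix every point of the Gromov boundary: if $x_n$ converges to $\xi \in \partial_\infty \calC(S)$ and $d(\phi(x_n), x_n) \leq D$, then $\phi(x_n)$ converges to the same $\xi$. By Klarreich's theorem, this boundary is identified with the space $\EL(S)$ of ending laminations. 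The Ivanov--Korkmaz--Luo identification of $\Aut(\calC(S))$ with the extended mapping class group $\MCG^*(S)$, together with the faithfulness of the $\MCG^*(S)$-action on $\EL(S)$, then forces $\phi = \Id$.

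The main obstacle is this injectivity step, whose cleanest treatment passes through Klarreich's boundary theorem and the classical automorphism rigidity of the curve complex. A more elementary alternative would case-split on the Nielsen--Thurston type of $\phi$, showing that pseudo-Anosov, reducible, and periodic mapping classes each have unbounded displacement on $\calC(S)$: pseudo-Anosovs act with positive translation length (Masur--Minsky); reducible elements can be pushed far from their canonical reduction systems; and periodic elements are pinned down by the faithful action assumption.
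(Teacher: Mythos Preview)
Your argument is correct and follows essentially the same route as the paper: surjectivity from Theorem~\ref{Thm:Rigid}, and injectivity via the observation that an isometry at bounded distance from the identity fixes $\partial\calC(S)$ pointwise, combined with Ivanov--Korkmaz--Luo and Klarreich. One small caution on phrasing: the map $\MCG^*(S)\to\Aut(\calC(S))$ can have nontrivial kernel (the hyperelliptic involution when $S=S_2$ or $S_{1,2}$), so the $\MCG^*(S)$--action on $\EL(S)$ is not literally faithful there; the paper avoids this by arguing directly that a nontrivial $f\in\Aut(\calC(S))$ moves some curve $a$, hence moves a $\PML(S)$--neighborhood of $a$, and hence (by density of filling laminations) moves some point of $\EL(S)$.
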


\begin{proof}
The map is always an injection.  To see this recall Ivanov's
Theorem~\cite{Ivanov02, Korkmaz99, Luo00}: if $\xi(S) \geq 2$ then
every $f \in \Aut(\calC(S))$ is induced by some homeomorphism of $S$,
called $f_S$.  (When $\xi(S) = 2$ every automorphism of $\calC(S)$ is
induced by some homeomorphism of $S_{0,5}$; see~\cite{Luo00}.)
Suppose that $f \in \Aut(\calC(S))$ is not the identity element.  Then
there is some curve $a$ with $f_S(a)$ not isotopic to $a$.  Consider
the action of $f_S$ on $\PML(S)$.  There is a small neighborhood of
$a$ in $\PML(S)$, say $U$, so that $f_S(U) \cap U = \emptyset$.  Since
ending laminations are dense, $f_S$ moves some ending lamination of
$S$.  By Klarreich's Theorem (see \refthm{Klarreich} below), we deduce
that $f$ moves some point of $\bdy \calC(S)$.  Finally, any isometry
of a Gromov hyperbolic space moving a point of the boundary is
nontrivial in the quasi-isometry group.

On the other hand, \refthm{Rigid} implies that the homomorphism
$\Aut(\calC(S)) \to \QI(\calC(S))$ is a surjection.
\end{proof}

Note that \refthm{Rigid} and \refcor{Auto} are sharp. If $S$ is a
sphere, disk, or pair of pants then the complex of curves is empty.
If $S$ is an annulus then, following~\cite{MasurMinsky00}, the complex
$\calC(S)$ is quasi-isometric to $\ZZ$ (see below) and the conclusion
of \refthm{Rigid} does not hold.
If $S$ is a torus, four-holed sphere or once-holed torus then the
curve complex is a copy of the Farey graph.  Thus $\calC(S)$ is
quasi-isometric to $T_\infty$, the countably infinite valence
tree~\cite{BellFujiwara08}.  Hence $\QI(\calC(S))$ is uncountable
while $\Aut(\calC(S)) = \PGL(2, \ZZ)$ is countable.  Thus, for these
surfaces the conclusion of \refthm{Rigid} does not hold.

We now give an application of \refcor{Auto}:


\begin{theorem}
\label{Thm:Homeomorphic}
Suppose that $S$ and $\Sigma$ are surfaces with $\calC(S)$
quasi-isometric to $\calC(\Sigma)$.  Then either 
\begin{itemize}
\item
$S$ and $\Sigma$ are homeomorphic, 
\item
$\{ S, \Sigma \} = \{ S_{0,6}, S_2 \}$, 
\item
$\{ S, \Sigma \} = \{ S_{0,5}, S_{1,2} \}$, 
\item
$\{ S, \Sigma \} \subset \{ S_{0,4}, S_1, S_{1,1} \}$, or
\item
$\{ S, \Sigma \} \subset \{ \Sph, \DD, S_{0,3} \}$.
\end{itemize}
Thus, two curve complexes are quasi-isometric if and only if they
are isomorphic.
\end{theorem}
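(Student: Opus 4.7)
The strategy is to split by complexity and reduce the substantive case to \refthm{Rigid}.  Suppose first that $\xi(S) \geq 2$ and $\xi(\Sigma) \geq 2$, and let $f \from \calC(S) \to \calC(\Sigma)$ be the given quasi-isometry, with quasi-inverse $g$.  Although \refthm{Rigid} is literally a statement about self-quasi-isometries, the method of proof should adapt to yield that $f$ is bounded distance from a simplicial isomorphism $\calC(S) \isom \calC(\Sigma)$: both source and target are reconstructed from coarse geometric data, and nothing in the argument uses that the two complexes coincide.  Granting this extension, Ivanov's theorem together with its low-complexity refinements (Korkmaz, Luo) promotes the simplicial isomorphism to a homeomorphism of surfaces, except at the classical exceptional isomorphisms induced by the hyperelliptic double covers $S_{1,2} \to S_{0,5}$ and $S_2 \to S_{0,6}$.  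These account precisely for the two pair-exceptions $\{S_{0,5}, S_{1,2}\}$ and $\{S_{0,6}, S_2\}$.

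Next I would dispose of the low-complexity surfaces by direct quasi-isometry invariants.  If $\xi(S) \leq 0$ then $S \in \{\Sph, \DD, S_{0,3}\}$ and $\calC(S)$ is empty; emptiness is a quasi-isometry invariant, so $\Sigma$ must also lie in $\{\Sph, \DD, S_{0,3}\}$.  If $S$ is an annulus then $\calC(S)$ is quasi-isometric to $\ZZ$, which has exactly two ends; no other surface's curve complex has this property, so $\Sigma$ is also an annulus and hence homeomorphic to $S$.  If $\xi(S) = 1$ then $\calC(S)$ is the Farey graph, quasi-isometric to $T_\infty$, with Gromov boundary a Cantor set.  For $\xi(\Sigma) \geq 2$ Klarreich's theorem identifies $\bdy \calC(\Sigma)$ with $\EL(\Sigma)$, which is connected; so this case cannot occur, and $\Sigma$ must also have $\xi(\Sigma) = 1$, placing $\{S, \Sigma\} \subset \{S_{0,4}, S_1, S_{1,1}\}$.

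For the converse direction, each listed case produces isomorphic (hence quasi-isometric) curve complexes.  Homeomorphic surfaces trivially have isomorphic curve complexes.  For $\{S_{0,5}, S_{1,2}\}$ and $\{S_{0,6}, S_2\}$ the hyperelliptic involution descends to a simplicial isomorphism of curve complexes.  For $\{S, \Sigma\} \subset \{S_{0,4}, S_1, S_{1,1}\}$ each curve complex is the Farey graph.  For $\{S, \Sigma\} \subset \{\Sph, \DD, S_{0,3}\}$ each curve complex is empty.  This closes the biconditional.

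The main obstacle is the first step.  \refthm{Rigid} is stated only for self-quasi-isometries, and promoting it to quasi-isometries between distinct curve complexes is not purely formal.  The obvious composition trick --- $g \circ f$ is bounded distance from some $\phi \in \Aut(\calC(S))$ and $f \circ g$ close to some $\psi \in \Aut(\calC(\Sigma))$ --- falls just short of producing an isomorphism close to $f$ itself.  The fix requires inspecting the proof of \refthm{Rigid} and verifying that every step --- detecting maximal multicurves, ranks of links, and so on --- is carried out via coarse invariants that transport across a quasi-isometry irrespective of the ambient complex.
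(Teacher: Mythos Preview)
Your low-complexity analysis and the converse direction are fine, and in fact more explicit than what the paper writes out. The substantive difference is in the main case $\xi(S), \xi(\Sigma) \geq 2$.

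You correctly flag extending \refthm{Rigid} to quasi-isometries between \emph{distinct} complexes as an obstacle, and you are right that it is not purely formal. But the paper sidesteps this entirely. Rather than trying to promote the individual quasi-isometry $f$ to a simplicial isomorphism, the paper passes to groups: a quasi-isometry $\calC(S) \to \calC(\Sigma)$ induces, by conjugation, an isomorphism $\QI(\calC(S)) \isom \QI(\calC(\Sigma))$. Now \refcor{Auto}, applied separately to $S$ and to $\Sigma$, identifies each $\QI$ group with the corresponding $\Aut$ group, so $\Aut(\calC(S)) \isom \Aut(\calC(\Sigma))$. Ivanov's theorem identifies each automorphism group with the mapping class group, and finally \refthm{Ivanov} in the appendix says that isomorphic mapping class groups force homeomorphic surfaces, with the short list of exceptions in the statement. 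No cross-surface version of \refthm{Rigid} is ever needed; only its self-map statement, packaged as \refcor{Auto}.

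Your proposed fix --- re-examining the proof of \refthm{Rigid} for coarse invariants like ``maximal multicurves'' and ``ranks of links'' --- is also based on a misreading of what that proof actually contains. In this paper \refthm{Rigid} is proved by lifting the quasi-isometry to the marking complex via \refthm{LipschitzExtension} and then invoking the Behrstock--Kleiner--Minsky--Mosher rigidity theorem (\refthm{MCGRigid}) for $\calM(S)$, which is itself a statement about self-quasi-isometries. So your route would require extending that black box as well, whereas the paper's group-level argument avoids the issue altogether.
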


To prove \refthm{Homeomorphic} we require \refthm{Rigid} and the
following folk theorem:

\begin{restate}{Theorem}{Thm:Ivanov}
Suppose that $S$ and $\Sigma$ are compact, connected, orientable
surfaces with $\MCG(S)$ isomorphic to $\MCG(\Sigma)$.  Then either
\begin{itemize}
\item
$S$ and $\Sigma$ are homeomorphic, 
\item
$\{ S, \Sigma \} = \{ S_1, S_{1,1} \}$, or
\item
$\{ S, \Sigma \} = \{ \Sph, \DD \}$.
\end{itemize}
\end{restate}

Apparently no proof of \refthm{Ivanov} appears in the literature.  In
\refapp{Ivanov} we discuss previous work (\refrem{Ivanov}) and, for
completeness, give a proof of \refthm{Ivanov}.

\begin{proof}[Proof of \refthm{Homeomorphic}]
For brevity, we restrict to the case where $\xi(S)$ and $\xi(\Sigma)$
are at least four.  By \refcor{Auto} the automorphism groups of
$\calC(S)$ and $\calC(\Sigma)$ are isomorphic.  Ivanov's
Theorem~\cite{Ivanov02, Korkmaz99, Luo00} tells us that the simplicial
automorphism group is isomorphic to the mapping class group.  Finally,
it follows from \refthm{Ivanov} that such surfaces are characterized,
up to homeomorphism, by their mapping class groups.
\end{proof}

\subsection*{Outline of the paper}
The proof of \refthm{Rigid} has the following ingredients.  A pair of
ending laminations is {\em cobounded} if the projections of this pair
to any strict subsurface of $S$ are uniformly close to each other in
the complex of curves of that subsurface (see \refdef{Cobounded}).

\begin{restate}{Theorem}{Thm:Cobounded}
Suppose that $\xi(S) \geq 2$ and suppose that $\phi \from \calC(S) \to
\calC(\Sigma)$ is a quasi-isometric embedding.  Then the induced map
on boundaries preserves the coboundedness of ending laminations.
\end{restate}

\refthm{Cobounded} is important in its own right and may have other
applications. For example, it may be helpful in classifying
quasi-isometric embeddings of one curve complex into another.
(See~\cite{RafiSchleimer09}.)  The proof of \refthm{Cobounded} uses
the following theorem in an essential way:

\begin{theorem}[Gabai~\cite{Gabai09}]
\label{Thm:Gabai}
Suppose that $\xi(S) \geq 2$.  Then $\bdy \calC(S)$ is connected. \qed
\end{theorem}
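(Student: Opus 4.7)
I plan to argue by contradiction. Suppose that $(\mu^+, \mu^-)$ is a cobounded pair in $\partial \calC(S)$, but that the image pair $(\phi(\mu^+), \phi(\mu^-))$ fails to be cobounded in $\partial \calC(\Sigma)$. Then there is a proper subsurface $Y' \subsetneq \Sigma$ with $d_{Y'}(\phi(\mu^+), \phi(\mu^-))$ arbitrarily large. The Masur--Minsky bounded geodesic image theorem then forces every geodesic $h$ in $\calC(\Sigma)$ from $\phi(\mu^-)$ to $\phi(\mu^+)$ to contain a vertex $c'$ disjoint from $\partial Y'$, and makes $c'$ into a bottleneck: any path in $\calC(\Sigma)$ from $\phi(\mu^-)$ to $\phi(\mu^+)$ that stays outside a suitable neighborhood of $c'$ must incur length growing with $d_{Y'}$.

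Fix a geodesic $g$ in $\calC(S)$ from $\mu^-$ to $\mu^+$. Its $\phi$-image is a quasi-geodesic in $\calC(\Sigma)$ joining the image endpoints, so by hyperbolicity of $\calC(\Sigma)$ it fellow-travels $h$; in particular some vertex $x \in g$ has $\phi(x)$ within bounded distance of $c'$. The strategy is to produce a second quasi-geodesic $g'$ in $\calC(S)$, also from $\mu^-$ to $\mu^+$, whose $\phi$-image stays uniformly far from $c'$ in $\calC(\Sigma)$. Such a $g'$ would contradict the bottleneck property of $c'$, completing the proof.

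The construction of $g'$ is where Gabai's \refthm{Gabai} enters essentially. Connectedness of $\partial \calC(S)$ lets us join $\mu^-$ to $\mu^+$ by a continuous path of ending laminations. I would sample such a path at intermediate points $\nu_1, \ldots, \nu_k$ so that the consecutive pairs $(\mu^-, \nu_1), (\nu_1, \nu_2), \ldots, (\nu_k, \mu^+)$ are themselves all cobounded (using the coboundedness of the original pair, the Klarreich topology on $\EL(S)$, and small perturbations to arrange that cobound constants stay uniformly controlled). Concatenating geodesic segments between consecutive sample points produces $g'$, and the freedom in the continuous path and sample density lets us arrange $g'$ to miss a large $\calC(S)$-neighborhood of $x$. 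Applying $\phi$ to $g'$ yields a uniform quasi-geodesic in $\calC(\Sigma)$ from $\phi(\mu^-)$ to $\phi(\mu^+)$ staying far from $\phi(x) \approx c'$, yielding the desired contradiction.

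The main obstacle is making this detour construction quantitative: one must verify that a connected boundary contains enough cobounded-linkable laminations at every scale, that the concatenated $g'$ is a genuine quasi-geodesic with uniform constants (requiring that each segment be an actual bounded-projection, hence thick, quasi-geodesic), and that the neighborhood of $x$ missed by $g'$ is large compared to the constants of $\phi$ and the hyperbolicity/bottleneck constants of $\calC(\Sigma)$. Gabai's theorem is precisely what makes the intermediate laminations available, and the hypothesis $\xi(S) \geq 2$ enters through this step; on surfaces whose boundary is disconnected the statement genuinely fails, which is a useful sanity check on the essential role of connectedness.
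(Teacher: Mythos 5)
There is a fundamental mismatch here: the statement you were asked to prove is Gabai's theorem itself --- that $\bdy \calC(S)$ is connected when $\xi(S) \geq 2$ --- but your argument \emph{assumes} that connectedness (``Connectedness of $\partial \calC(S)$ lets us join $\mu^-$ to $\mu^+$ by a continuous path of ending laminations'') and uses it as an ingredient to establish a different claim, namely that a quasi-isometric embedding $\phi \from \calC(S) \to \calC(\Sigma)$ preserves coboundedness of pairs of ending laminations. That claim is essentially \refthm{Cobounded} of the paper, not \refthm{Gabai}. As a proof of the stated theorem your argument is circular; as written it never engages with the actual content to be proved, which is a statement purely about the topology of $\EL(S) \cong \bdy \calC(S)$ (via Klarreich's theorem) and makes no reference to any second surface $\Sigma$ or any quasi-isometric embedding.

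For comparison: the paper does not prove this theorem at all --- it is quoted from Gabai~\cite{Gabai09}, whose proof is a direct topological argument about the space of ending laminations (and Leininger--Schleimer~\cite{LeiningerSchleimer09} earlier handled certain genus ranges by different methods). A genuine proof would require techniques of that kind --- constructing paths in $\EL(S)$, e.g.\ via almost-filling laminations or related machinery --- none of which appear in your proposal. What you have sketched is instead roughly the role Gabai's theorem plays downstream in this paper (it feeds into \refprop{ShellConnected} and \reflem{FinitelyMany}, which underlie the proof of \refthm{Cobounded}), and even for that purpose the paper's route differs from yours: it works with connectedness of shells in $\calC(S)$ and a finiteness argument modulo the mapping class group, rather than concatenating cobounded geodesic segments between sampled boundary laminations. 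So the concrete gap is that the statement at hand is never proved, and cannot be proved by any argument that takes the connectedness of $\bdy \calC(S)$ as an input.
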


\begin{remark}
Leininger and the second author~\cite{LeiningerSchleimer09} previously
gave a quite different proof of \refthm{Gabai} in the cases where $S$
has genus at least four, or where $S$ has genus at least two and
non-empty boundary.  Note that Gabai's Theorem is sharp; $\bdy
\calC(S)$ is not connected when $S$ is an annulus, torus, once-holed
torus or four-holed sphere.
\end{remark}

Let $\calM(S)$ denote the {\em marking complex} of the surface $S$.
We show that a marking on $S$ can be coarsely described by a pair of
cobounded ending laminations and a curve in
$\calC(S)$. \refthm{Cobounded} implies that a quasi-isometric
embedding of $\calC(S)$ into $\calC(\Sigma)$ induces a map from
$\calM(S)$ to $\calM(\Sigma)$.

\begin{restate}{Theorem}{Thm:LipschitzExtension}
Suppose that $\xi(S) \geq 2$ and $\phi \from \calC(S) \to
\calC(\Sigma)$ is a $\quasi$--quasi-isometric embedding.  Then $\phi$
induces a coarse Lipschitz map $\Phi \from \calM(S) \to \calM(\Sigma)$
so that the diagram
\[
\begin{CD} 
\calM(S) @>\Phi>> \calM(\Sigma)\\ 
 @VVpV               @VV\pi V  \\ 
\calC(S) @>\phi>> \calC(\Sigma) 
\end{CD} 
\]
commutes up to an additive error. Furthermore, if $\phi$ is a
quasi-isometry then so is $\Phi$.
\end{restate}

As the final step of the proof of \refthm{Rigid} we turn to a recent
theorem of Behrstock, Kleiner, Minsky and
Mosher~\cite{BehrstockEtAl08}.  See also~\cite{Hamenstaedt05b}.

\begin{theorem}
\label{Thm:MCGRigid}
Suppose that $\xi(S) \geq 2$ and $S \not = S_{1,2}$.  Then every
quasi-isometry of $\calM(S)$ is bounded distance from the action of a
homeomorphism of $S$. \qed
\end{theorem}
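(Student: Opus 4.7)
The plan is to exploit the hierarchical structure of $\calM(S)$ through subsurface projections, in the spirit of the Masur--Minsky machinery. The central input is the distance formula, which says that for any two markings $\mu, \nu \in \calM(S)$,
\[
d_{\calM(S)}(\mu,\nu) \asymp \sum_{Y \subseteq S} \bigl[d_{\calC(Y)}(\pi_Y(\mu), \pi_Y(\nu))\bigr]_{\error},
\]
where the sum ranges over isotopy classes of essential subsurfaces of $S$ and $[\,\cdot\,]_{\error}$ is a threshold function. Any quasi-isometry $\Phi \from \calM(S) \to \calM(S)$ must therefore coarsely respect this decomposition.

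The first step I would take is to classify the top-dimensional quasi-flats in $\calM(S)$. For a pants decomposition $P$ the set of markings with base $P$ is quasi-isometric to $\ZZ^{\xi(S)}$, one factor per annular subsurface; more generally, for a decomposition by pairwise disjoint non-annular subsurfaces $Y_1, \ldots, Y_k$ there is a quasi-isometrically embedded product $\calM(Y_1) \times \cdots \times \calM(Y_k)$ inside $\calM(S)$. Using the Behrstock inequality to control the ways in which distinct subsurface projections interact, either directly or through an asymptotic cone argument, I would show that $\Phi$ permutes these maximal product regions up to bounded Hausdorff distance. The combinatorics of how the product regions are glued along shared factors is recorded by the curve and pants graphs of $S$.

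From the action of $\Phi$ on product regions I would extract a simplicial automorphism of $\calC(S)$: a pants decomposition is detected by a maximal collection of commuting Dehn-twist $\ZZ$-directions in $\calM(S)$, and adjacency in the pants graph records disjointness of curves. By Ivanov's theorem (compare \refcor{Auto}) this automorphism is realised by a homeomorphism $h_S$ of $S$. To conclude, the consistency/realisation theorem for markings says that a marking is coarsely determined by its tuple of subsurface projections; since $\Phi$ and $h_S$ induce the same maps on every $\pi_Y$ up to bounded error, the distance formula gives $d_{\calM(S)}(\Phi(\mu), h_S(\mu)) = O(1)$ uniformly in $\mu$.

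The main obstacle is the quasi-flat rigidity step, namely showing that $\Phi$ cleanly permutes maximal product regions rather than shearing between their factors; this is where the bulk of the geometric work lies and why the argument of Behrstock, Kleiner, Minsky and Mosher passes through asymptotic cones and their tree-graded decompositions. The surface $S_{1,2}$ must be excluded because its hyperelliptic involution is central in $\MCG(S_{1,2})$ and acts trivially on the curve complex, so the reconstruction of a homeomorphism from the induced curve-complex automorphism fails there; more generally, the boundary case $\xi(S) = 2$ requires care since the product-region dimension count degenerates and additional combinatorial input is needed.
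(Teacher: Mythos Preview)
The paper does not prove \refthm{MCGRigid}; it is quoted as an external input, attributed to Behrstock, Kleiner, Minsky and Mosher~\cite{BehrstockEtAl08} (see also Hamenst\"adt~\cite{Hamenstaedt05b}), and marked with a terminal \qedsymbol. There is therefore no proof in the paper to compare your proposal against.

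That said, your outline is a fair high-level description of how the cited result is actually established: the Masur--Minsky distance formula, the analysis of maximal product regions/quasi-flats, passage to asymptotic cones and their tree-graded structure, recovery of a simplicial automorphism of $\calC(S)$, and then Ivanov's theorem. You are also right that the hard step is the quasi-flat rigidity, and that it is handled in the cited work via asymptotic cones rather than by a direct argument. One small caution: invoking \refcor{Auto} at the point where you pass from a curve-complex automorphism to a homeomorphism would be circular in the logic of this paper, since \refcor{Auto} depends on \refthm{Rigid}, which in turn relies on \refthm{MCGRigid}; you want Ivanov's theorem itself there, as you in fact say.

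Your explanation of the $S_{1,2}$ exclusion is essentially correct. Note how the present paper sidesteps it: since $\calC(S_{1,2})$ and $\calC(S_{0,5})$ are canonically identified, \refthm{Rigid} for $S_{1,2}$ is deduced from the $S_{0,5}$ case, where \refthm{MCGRigid} does apply.
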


So, if $f \from \calC(S) \to \calC(S)$ is a quasi-isometry then
\refthm{LipschitzExtension} gives a quasi-isometry $F$ of marking
complexes.  This and \refthm{MCGRigid} imply \refthm{Rigid} except
when $S = S_{1,2}$. But the curve complexes $\calC(S_{0,5})$ and
$\calC(S_{1,2})$ are identical.  Therefore to prove \refthm{Rigid} for
$\calC(S_{1,2})$ it suffices to prove it for $\calC(S_{0,5})$.

\subsection*{Acknowledgements}  
This paper was sparked by a question of Slava Matveyev.  
We thank Dan Margalit for useful conversations.

\section{Background}

\subsection*{Hyperbolic spaces}
A geodesic metric space $\calX$ is {\em Gromov hyperbolic} if there is
a {\em hyperbolicity constant} $\delta \geq 0$ so that every triangle
is $\delta$--{\em slim}: for every triple of vertices $x, y, z \in
\calX$ and every triple of geodesics $[x, y], [y, z], [z,x]$ the
$\delta$--neighborhood of $[x, y] \cup [y, z]$ contains $[z,x]$.

Suppose that $(\calX, d_\calX)$ and $(\calY, d_\calY)$ are geodesic
metric spaces and $f \from \calX \to \calY$ is a map.  Then $f$ is
{\em $\quasi$--coarsely Lipschitz} if for all $x, y \in \calX$ we have
\[
d_\calY(x', y') \leq \quasi\,d_\calX(x, y) + \quasi
\] 
where $x' = f(x)$ and $y' = f(y)$.  If, in addition, 
\[
d_\calX(x, y) \leq \quasi\,d_\calY(x', y') + \quasi
\]
then $f$ is a {\em $\quasi$--quasi-isometric embedding}. Two maps $f,
g \from \calX \to \calY$ are $\distance$--{\em close} if for all $x
\in \calX$ we have
\[
d_\calY(f(x), g(x)) \leq \distance.
\] 
If $f \from \calX \to \calY$ and $g \from \calY \to \calX$ are
$\quasi$--coarsely Lipschitz and also $f \circ g$ and $g \circ f$ are
$\quasi$--close to identity maps then $f$ and $g$ are $\quasi$--{\em
quasi-isometries}.

A quasi-isometric embedding of an interval $[s, t] \subset \ZZ$, with
the usual metric, is called a {\em quasi-geodesic}.  In hyperbolic
spaces quasi-geodesics are {\em stable}:

\begin{lemma}
\label{Lem:Stability}
Suppose that $(\calX, d_\calX)$ has hyperbolicity constant $\delta$
and that $f \from [s, t] \to \calX$ is a $\quasi$--quasi-geodesic.
Then there is a constant $\stab_\calX = \stab(\delta, \quasi)$ so that
for any $[p, q] \subset [s, t]$ the image $f([p, q])$ and any geodesic
$[f(p), f(q)]$ have Hausdorff distance at most $\stab_\calX$ in
$\calX$.  \qed
\end{lemma}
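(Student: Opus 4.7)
This is the classical Morse stability lemma for quasi-geodesics in a Gromov hyperbolic space, so my plan follows the standard three-step strategy: tame the quasi-geodesic to make it continuous, bound how far its image can wander from the geodesic, and deduce the reverse Hausdorff inequality.

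For taming, consecutive values $f(n), f(n{+}1)$ lie at distance at most $2\quasi$, so joining them by geodesic segments parametrised by arclength gives a continuous, $(2\quasi)$--Lipschitz map $\bar f \from [s, t] \to \calX$ which is a $\quasi'$--quasi-geodesic for some $\quasi' = \quasi'(\quasi)$, and whose image has Hausdorff distance at most $\quasi$ from the image of $f$. It therefore suffices to prove the Hausdorff bound for $\bar f([p, q])$ and $\gamma \defeq [\bar f(p), \bar f(q)]$.

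For the main step, let $D$ be the largest distance from a point of $\gamma$ to the image of $\bar f$, attained at some $x_0 \in \gamma$. Choose $y_\pm \in \gamma$ at distance $2D$ from $x_0$ on either side (or take the endpoints of $\gamma$ if no such point exists), and pick $u_\pm = \bar f(t_\pm)$ with $d_\calX(y_\pm, u_\pm) \leq D$. The concatenated path
\[
\sigma = [y_-, u_-] \ast \bar f|_{[t_-, t_+]} \ast [u_+, y_+]
\]
joins $y_-$ to $y_+$, and the Lipschitz property combined with the quasi-geodesic inequality bounds its total length $\ell$ by $C_1 D + C_1$ for some $C_1 = C_1(\quasi)$. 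A short inductive midpoint-triangle argument using $\delta$--slimness shows that any point on a geodesic in $\calX$ is within $\delta \log_2 \ell + 1$ of the image of any continuous path of length $\ell$ with the same endpoints; applied to $x_0$ and $\sigma$, this gives $d_\calX(x_0, \sigma) \leq \delta \log_2 \ell + 1$. On the other hand, $x_0$ is at distance exactly $D$ from the image of $\bar f$ and at distance $\geq D$ from each bridge $[y_\pm, u_\pm]$ by the $(2D)$--separation, so $d_\calX(x_0, \sigma) \geq D$; combining, $D \leq \delta \log_2 (C_1 D + C_1) + 1$, which forces $D \leq D_0(\delta, \quasi)$.

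For the reverse bound, as $t$ varies in $[p, q]$ the nearest-point projection $\pi_\gamma(\bar f(t))$ moves along $\gamma$ from $\bar f(p)$ to $\bar f(q)$ with jumps bounded by the Lipschitz constant of $\bar f$ plus $2 D_0$, so an intermediate-value argument places every point of $\gamma$ within a bounded distance of some $\bar f(t)$. Accumulating constants and translating back to $f$ via the taming estimate yields the desired $\stab_\calX = \stab(\delta, \quasi)$. The main obstacle is ensuring that the middle step produces a genuine a priori bound on $D$ rather than a circular one; the $(2D)$--separation of $y_\pm$ from $x_0$ is the essential trick, keeping $x_0$ far enough from the bridges that $\delta$--slimness must route through the Lipschitz subpath of $\bar f$, whose length is the crucial quantity.
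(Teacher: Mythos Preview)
The paper does not actually prove this lemma: it is stated with a terminal \qedsymbol{} and the surrounding text refers the reader to Bridson--Haefliger for background on hyperbolic spaces. Your write-up follows exactly the classical argument (essentially \cite[Theorem~III.H.1.7]{Bridson99}), so in spirit you are doing what the paper intends the reader to look up.

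That said, your step 3 is not right as written. Step 2 bounds $D = \sup_{x \in \gamma} d_\calX(x, \bar f([p,q]))$, i.e.\ shows $\gamma \subset N_{D_0}(\operatorname{image}\bar f)$. The ``reverse bound'' must show $\operatorname{image}\bar f \subset N_{?}(\gamma)$. But your intermediate-value argument on $\pi_\gamma \circ \bar f$ again concludes that ``every point of $\gamma$ is within a bounded distance of some $\bar f(t)$'', which is the direction already handled in step~2. Tracking the projection along $\gamma$ says nothing about how far $\bar f(t)$ itself is from its projection, which is precisely the quantity you need.

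The standard repair: suppose $\bar f(t_0) \notin N_{D_0}(\gamma)$ and take the maximal subinterval $[a,b] \ni t_0$ with $\bar f((a,b)) \cap N_{D_0}(\gamma) = \emptyset$. Then $\bar f(a), \bar f(b)$ are $D_0$--close to points $y_a, y_b \in \gamma$. Every point of the subarc $[y_a, y_b] \subset \gamma$ is $D_0$--close to the image (step~2) but, by choice of $[a,b]$, not to $\bar f((a,b))$; hence the closed sets $\{w : d_\calX(w, \bar f([p,a])) \leq D_0\}$ and $\{w : d_\calX(w, \bar f([b,q])) \leq D_0\}$ cover $[y_a,y_b]$ and, by connectedness, meet at some $w$. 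Now $d_\calX(\bar f(a'), \bar f(b')) \leq 2D_0$ for suitable $a' \leq a$ and $b' \geq b$, so the quasi-geodesic inequality bounds $|b-a|$ and hence $d_\calX(\bar f(t_0), \gamma)$ via the Lipschitz constant.
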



See~\cite{Bridson99} for further background on hyperbolic spaces.

\subsection*{Curve Complexes}
Let $S = S_{\g,\boundary}$, as before. Define the vertex set of the curve
complex, $\calC(S)$, to be the set of simple closed curves in $S$ that
are essential and non-peripheral, considered up to isotopy.

When the complexity $\xi(S)$ is at least two, distinct vertices $a, b
\in \calC(S)$ are connected by an edge if they have disjoint
representatives.

When $\xi(S) = 1$ vertices are connected by an edge if there are
representatives with geometric intersection exactly one for the torus
and once-holed torus or exactly two for the four-holed sphere.  This
gives the {\em Farey graph}.  When $S$ is an annulus the vertices are
essential embedded arcs, considered up to isotopy fixing the boundary
pointwise.  Vertices are connected by an edge if there are
representatives with disjoint interiors.

For any vertices $a, b \in \calC(S)$ define the distance $d_S(a,b)$ to
be the minimal number of edges appearing in an edge path between $a$
and $b$. 

\begin{theorem}[Masur-Minsky~\cite{MasurMinsky99}]
\label{Thm:Hyp}
The complex of curves $\calC(S)$ is Gromov hyperbolic.  \qed
\end{theorem}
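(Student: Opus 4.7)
The plan is to invoke Bowditch's \emph{guessing geodesics} criterion for Gromov hyperbolicity: a geodesic space $\calX$ is hyperbolic provided there is a constant $k$ and an assignment $(\alpha, \beta) \mapsto \eta_{\alpha\beta}$ of a connected subset to each pair of points such that (i) $\diam(\eta_{\alpha\beta}) \leq k$ whenever $d(\alpha, \beta) \leq 1$, and (ii) for any three points $\alpha, \beta, \gamma$ the set $\eta_{\alpha\beta}$ lies in the $k$-neighborhood of $\eta_{\alpha\gamma} \cup \eta_{\gamma\beta}$. This converts the problem into producing a single well-behaved family of preferred paths in $\calC(S)$.

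To build $\eta_{\alpha\beta}$ I would work in \Teich space. Choose marked hyperbolic structures $X, Y \in \calT(S)$ on which $\alpha$, $\beta$ are respectively shortest, and let $G \from [0, T] \to \calT(S)$ be the \Teich geodesic from $X$ to $Y$. For each time $t$ record any curve $\sigma(t) \in \calC(S)$ of hyperbolic length on $G(t)$ at most the Bers constant; the function $t \mapsto \sigma(t)$ is locally constant off a finite set, and at each jump the old and new curves are simultaneously short on one hyperbolic surface, hence have intersection bounded in terms of $\xi(S)$ and therefore bounded $d_\calC$ distance. Concatenating short $\calC(S)$-paths at the jump times produces a connected $\eta_{\alpha\beta} \subset \calC(S)$ containing $\alpha$ and $\beta$. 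Verifying Bowditch's conditions then reduces to a \emph{projection-contraction} estimate: if $\delta \in \calC(S)$ and $s$ is a time at which $\delta$ has minimal hyperbolic length along $G$, then the \Teich flow exponentially contracts the horizontal measured foliation, forcing $i(\delta, \sigma(t))$ to grow exponentially in $|t - s|$. Combined with the standard inequality $d_\calC(\mu, \nu) \leq 2 \log_2 i(\mu, \nu) + 2$ for nondisjoint curves, this confines any nearest-point projection of $\delta$ onto $\eta_{\alpha\beta}$ to a uniformly bounded window near $\sigma(s)$, from which both Bowditch conditions follow.

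I expect the main obstacle to be making the contraction estimate uniform in the thin part of \Teich space. When $G$ spends a long interval in a region where some curve $c$ is extremely short, the assignment $\sigma$ stalls at $c$, while a nearby \Teich geodesic may traverse the same thin region along a rather different vertical foliation, so a naive comparison of $\eta_{\alpha\beta}$ with a perturbation fails. Overcoming this requires working with extremal rather than hyperbolic length, together with the strong convexity of length functions along \Teich geodesics; this is the technical heart of the Masur--Minsky argument, and once it is in hand Bowditch's criterion yields a hyperbolicity constant depending only on $\xi(S)$.
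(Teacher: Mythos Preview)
The paper does not prove this theorem at all: note the \qed\ immediately following the statement and the attribution to Masur--Minsky~\cite{MasurMinsky99}.  It is quoted as background, so there is nothing in the paper to compare your argument against.

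That said, your outline is a recognizable sketch, though it blends two historically distinct proofs.  The original Masur--Minsky argument does take the short-curve projection of a \Teich geodesic as the preferred path, but it does \emph{not} use Bowditch's guessing-geodesics criterion (which came later); instead it proves a direct contraction property for nearest-point projection to these paths, via a nested train-track argument controlling how quickly intersection numbers grow.  The framing you propose---verify Bowditch's thin-triangle conditions for a family of candidate paths---is closer to Bowditch's own later reproof and to Hamenst\"adt's splitting-sequence approach.  Either route works, and you have correctly identified the crux: getting a uniform contraction/projection estimate that survives long excursions into the thin part.  One caution: your description of the contraction step (``the \Teich flow exponentially contracts the horizontal measured foliation, forcing $i(\delta,\sigma(t))$ to grow exponentially'') is too glib as stated, since exponential growth of intersection with the horizontal foliation does not by itself control intersection with the \emph{short curve} $\sigma(t)$ when $\sigma(t)$ is itself nearly horizontal; making this precise is exactly where extremal length and the thick--thin decomposition enter, as you anticipate in your final paragraph.
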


We use $\delta_S$ to denote the hyperbolicity constant of $\calC(S)$.

\subsection*{Boundary of the curve complex}
Let $\bdy \calC(S)$ be the Gromov boundary of $\calC(S)$. This is the
space of quasi-geodesic rays in $\calC(S)$ modulo equivalence: two
rays are equivalent if and only if their images have bounded
Hausdorff distance.


Recall that $\PML(S)$ is the projectivized space of measured
laminations on $S$.  A measured lamination $\ell$ is {\em filling} if
every component $S \setminus \ell$ is a disk or a boundary-parallel
annulus.  Take $\FL(S) \subset \PML(S)$ to be the set of filling
laminations with the subspace topology.  Define $\EL(S)$, the space of
ending laminations, to be the quotient of $\FL(S)$ obtained by
forgetting the measures.  See~\cite{Kapovich01} for an expansive
discussion of laminations.

\begin{theorem}[Klarreich~\cite{Klarreich99}]
\label{Thm:Klarreich}
There is a mapping class group equivariant homeomorphism between $\bdy
\calC(S)$ and $\EL(S)$.  \qed
\end{theorem}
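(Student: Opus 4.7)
The plan is to construct a mapping class group equivariant bijection $\Psi \from \EL(S) \to \bdy \calC(S)$ using Teichmüller geodesic rays, and then verify that $\Psi$ is a homeomorphism. Fix a basepoint $X_0 \in \Teich(S)$. For any filling measured lamination $\mu$ representing a class $[\mu] \in \EL(S)$, let $r_\mu \from [0, \infty) \to \Teich(S)$ denote the Teichmüller ray from $X_0$ with vertical foliation $\mu$, and let $\alpha_\mu(t) \in \calC(S)$ be a systole of $r_\mu(t)$. The first key input is the Masur--Minsky theorem that $\alpha_\mu$ is an unparameterized quasi-geodesic in $\calC(S)$; combined with \refthm{Hyp} and \reflem{Stability}, the ray $\alpha_\mu$ accumulates at a single well-defined point $\Psi([\mu]) \in \bdy \calC(S)$. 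To see that $\Psi$ descends from $\FL(S)$ to $\EL(S)$, I would use the fact that Teichmüller rays with topologically equivalent vertical foliations are asymptotic in the thick part after reparametrization, so their systole projections fellow travel.

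For injectivity, if $[\mu_1] \ne [\mu_2]$ in $\EL(S)$ then the rays $r_{\mu_1}$ and $r_{\mu_2}$ eventually diverge through the thick part of $\Teich(S)$; using the coarse Lipschitz nature of the systole map I would show the projections $\alpha_{\mu_1}$ and $\alpha_{\mu_2}$ have unbounded Hausdorff distance, giving distinct boundary points. Surjectivity is the subtler direction: given $\xi \in \bdy \calC(S)$ represented by a quasi-geodesic ray $\gamma$, lift $\gamma$ to a sequence of complete markings and thence to a sequence in $\Teich(S)$, then extract a $\PML$ accumulation point $\nu$ of the base curves. Since $\calC(S)$-distance along $\gamma$ diverges, $\nu$ cannot be supported on any proper subsurface nor share a component with a peripheral curve, so $\nu$ is filling; its class in $\EL(S)$ is independent of the chosen accumulation point because any two such give Teichmüller rays whose systole projections are both asymptotic to $\gamma$, hence are topologically equivalent by the injectivity argument above. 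Equivariance under $\MCG(S)$ is immediate from the naturality of every construction involved.

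The main obstacle is continuity in both directions, since the topology on $\EL(S)$ is the quotient of the $\PML$-topology by the measure-forgetting map while the topology on $\bdy \calC(S)$ is defined by Gromov products. For $\Psi$ continuous: if $[\mu_n] \to [\mu]$ in $\EL(S)$, I would choose measure representatives with $\mu_n \to \nu$ in $\PML$ where $\nu$ is topologically equivalent to $\mu$; the rays $r_{\mu_n}$ then converge uniformly on compact sets to $r_\nu$, so their systole projections fellow travel ever longer initial segments, forcing the Gromov products $(\Psi([\mu_n]) \cdot \Psi([\mu]))$ to tend to infinity. For $\Psi^{-1}$ continuous: given $\Psi([\mu_n]) \to \Psi([\mu])$, pass to a $\PML$ accumulation point $\nu$ of representatives of $\mu_n$; the boundary convergence together with the quasi-geodesic systole projections forces $\nu$ to define the same class as $\mu$ in $\EL(S)$. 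The delicate point throughout is that no combinatorial description of convergence in $\EL(S)$ is available directly, so every comparison between the two topologies must be routed through Teichmüller geodesics and their coarse shadows in $\calC(S)$, where compactness and the minimality of filling laminations are used to rule out the pathological accumulation behavior that the non-Hausdorff lift $\FL(S) \to \EL(S)$ otherwise permits.
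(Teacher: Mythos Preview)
The paper does not prove \refthm{Klarreich}; it is quoted from Klarreich's preprint and closed with a \qed, so there is no in-paper argument to compare your proposal against. Your outline is broadly in the spirit of Klarreich's original approach, which routes the identification through Teichm\"uller geometry and the Masur--Minsky result that systole tracks of Teichm\"uller geodesics are unparameterized quasi-geodesics in $\calC(S)$.

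That said, one step in your sketch is not correct as stated and would need repair in a genuine proof. You assert that Teichm\"uller rays with topologically equivalent vertical foliations are asymptotic in the thick part after reparametrization; this is false in general. Masur showed that rays whose vertical foliations are topologically the same filling lamination but carry mutually singular transverse measures can diverge in $\calT(S)$, and they need not stay close even in the thick part. The correct argument for well-definedness of $\Psi$ on $\EL(S)$ does not pass through asymptoticity of the rays themselves; instead one argues directly in $\calC(S)$, showing that the systole tracks of two such rays have bounded Hausdorff distance (or, equivalently, that any short curve on one ray has bounded intersection with the common topological lamination and hence lies close to the other track). Your surjectivity and continuity sketches are plausible in outline but lean on the same conflation of Teichm\"uller-metric closeness with curve-complex closeness; in Klarreich's actual proof these are decoupled via the relative (electric) hyperbolicity of $\calT(S)$ modulo the thin parts.
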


We define $\overline{\calC(S)} = \calC(S) \cup \bdy \calC(S)$.  

\subsection*{Subsurface projection}
Suppose that $Z \subset S$ is an {\em essential} subsurface: $Z$ is
embedded, every component of $\bdy Z$ is essential in $S$, and $Z$ is
not a boundary-parallel annulus nor a pair of pants.  An essential
subsurface $Z \subset S$ is {\em strict} if $Z$ is not homeomorphic to
$S$.

A lamination $b$ {\em cuts} a subsurface $Z$ if every isotopy
representative of $b$ intersects $Z$.  If $b$ does not cut $Z$ then
$b$ {\em misses} $Z$.

Suppose now that $a, b \in \overline{\calC(S)}$ both cut a strict
subsurface $Z$.  Define the {\em subsurface projection distance}
$d_Z(a, b)$ as follows: isotope $a$ with respect to $\bdy Z$ to
realize the geometric intersection number.  Surger the arcs of $a \cap
Z$ to obtain $\pi_Z(a)$, a finite set of vertices in $\calC(Z)$.
Notice that $\pi_Z(a)$ has uniformly bounded diameter in $\calC(Z)$
independent of $a$, $Z$ or $S$.  Define
\[
d_Z(a,b) = \diam_Z \big( \pi_Z(a) \cup \pi_Z(b) \big).
\]

We now recall the Lipschitz Projection
Lemma~\cite[Lemma~2.3]{MasurMinsky00}:

\begin{lemma}[Masur-Minsky]
\label{Lem:LipschitzProjection}
Suppose that $\{ a_i \}_{i = 0}^N \subset \calC(S)$ is a path where
every vertex cuts $Z \subset S$.  Then $d_Z(a_0, a_N) \leq 2N$.  \qed
\end{lemma}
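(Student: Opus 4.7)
The plan is to reduce, via the triangle inequality, to the single-step claim: if $a, b \in \calC(S)$ are joined by an edge (so admit disjoint representatives) and both cut the strict subsurface $Z$, then $d_Z(a, b) \leq 2$. Summing this estimate over the $N$ edges of the path $\{ a_i \}_{i=0}^{N}$ yields $d_Z(a_0, a_N) \leq 2N$, as desired.

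For the single-step claim I would first realize $a$ and $b$ disjointly in $S$, then isotope the union, keeping disjointness, so that its intersection with $\bdy Z$ is minimized. The traces $a \cap Z$ and $b \cap Z$ are then disjoint collections of essential simple closed curves and essential arcs with endpoints on $\bdy Z$. Select a component $\alpha \subset a \cap Z$ contributing to $\pi_Z(a)$ and a component $\beta \subset b \cap Z$ contributing to $\pi_Z(b)$. Each such component is either already a simple closed curve (hence directly an element of $\calC(Z)$) or an arc that must be surgered.

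For an arc component, the surgery replaces $\alpha$ by a component of the frontier in $Z$ of a regular neighborhood of $\alpha$ together with the boundary circles of $\bdy Z$ met by $\bdy \alpha$; call the resulting curve $\alpha'$, and produce $\beta'$ analogously. Because $\alpha$ and $\beta$ are disjoint in $Z$, the curves $\alpha'$ and $\beta'$ lie in small neighborhoods of $\alpha \cup \bdy Z$ and $\beta \cup \bdy Z$ respectively, so any intersection between them occurs near $\bdy Z$. Since each component of $\bdy Z$ offers a two-way choice of direction in the surgery, one can select these choices so that $\alpha'$ and $\beta'$ are disjoint in $Z$, or at worst meet in a controlled way, yielding $d_Z(\alpha', \beta') \leq 2$.

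The main obstacle I anticipate is the case analysis when several arcs of $a \cap Z$ and $b \cap Z$ share a single boundary component of $\bdy Z$: the surgery directions must then be chosen coherently to avoid producing many intersections. The key simplification is that $\pi_Z(a)$ and $\pi_Z(b)$ each already have uniformly bounded diameter in $\calC(Z)$, so it suffices to exhibit one good pair $(\alpha', \beta')$ and invoke the triangle inequality inside $\calC(Z)$ to compare with arbitrary elements of the two projections.
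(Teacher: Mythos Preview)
The paper does not give a proof of this lemma; it is quoted directly from Masur--Minsky \cite[Lemma~2.3]{MasurMinsky00} and marked with a \qed. Your outline is precisely the standard argument used there: reduce by the triangle inequality to a single edge, then observe that disjoint curves $a$ and $b$ cutting $Z$ have projections $\pi_Z(a), \pi_Z(b)$ at distance at most $2$ in $\calC(Z)$.

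One organizational remark that sidesteps the case analysis you flag as an obstacle: since $a$ and $b$ are disjoint, $a \cup b$ is itself a multicurve cutting $Z$, and $\pi_Z(a) \cup \pi_Z(b) \subset \pi_Z(a \cup b)$. You may then simply invoke the uniform diameter bound for the projection of a single (multi)curve, already recorded in the paper just before the definition of $d_Z$, rather than matching up surgery choices for $\alpha$ and $\beta$ by hand.
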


For geodesics, the much stronger Bounded Geodesic Image Theorem
holds~\cite{MasurMinsky00, Minsky03}:

\begin{theorem}
\label{Thm:Image}
There is a constant $\image = \image(S)$ with the following property.
For any strict subsurface $Z$ and any points $a, b \in
\overline{\calC(S)}$, if every vertex of the geodesic $[a,b]$ cuts $Z$
then $d_Z(a,b) < \image$.  \qed
\end{theorem}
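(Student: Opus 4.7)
The plan is to invoke the hierarchy machinery of Masur and Minsky. Given $a, b \in \overline{\calC(S)}$, I would first extend them to complete markings $\mu_a, \mu_b \in \calM(S)$ with base curves $a$ and $b$ (using transversals, with suitable adjustments if $a$ or $b$ is an ending lamination), and then construct a hierarchy $H$ of tight geodesics from $\mu_a$ to $\mu_b$. The hierarchy has a main tight geodesic $g_H$ in $\calC(S)$ sharing its endpoints with any geodesic $[a,b]$, together with subordinate geodesics in $\calC(Z')$ for various strict subsurfaces $Z' \subset S$.

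The structural heart of the argument is the following consequence of the hierarchy theorem: there is a constant $M = M(S)$ such that if $d_Z(a, b) \geq M$, then $Z$ appears as a component domain of $H$. When this occurs, a component of $\bdy Z$ appears as a vertex of the main geodesic $g_H$, so $g_H$ contains a vertex that misses $Z$. Since any geodesic $[a,b]$ is uniformly close to the tight geodesic $g_H$ (by hyperbolicity, \refthm{Hyp}, and stability of quasi-geodesics, \reflem{Stability}), and indeed may be taken tight itself at bounded cost, the same conclusion holds for $[a,b]$: some vertex misses $Z$. This contradicts the hypothesis that every vertex of $[a,b]$ cuts $Z$. Hence $d_Z(a,b) < M$, and we take $\image = M$.

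The main obstacle is the structural claim itself: showing that a large subsurface projection forces $Z$ into the combinatorial structure of the hierarchy. I would prove this by induction on $\xi(S) - \xi(Z)$. The base case is when $Z$ is an annulus; there $\calC(Z) \cong \ZZ$ measures relative twisting about $\bdy Z$, and the bound follows from a direct analysis of a Teichm\"uller geodesic through the thin part of moduli space (or from Minsky's product region theorem). The inductive step uses the Lipschitz Projection Lemma (\reflem{LipschitzProjection}) to propagate projection data between curve complexes of nested subsurfaces, together with the forward and backward subordinacy relations governing component geodesics of the hierarchy. Granting this substantial combinatorial machinery, the theorem follows quickly; constructing and controlling the hierarchy is where essentially all the difficulty lies, and any alternative proof seems to require reconstructing an equivalent structure.
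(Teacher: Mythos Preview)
The paper does not prove \refthm{Image}; it is quoted from Masur--Minsky~\cite{MasurMinsky00} (their Theorem~3.1) and Minsky~\cite{Minsky03}, stated with a terminal box and used thereafter as a black box. So the relevant comparison is with the original Masur--Minsky argument, not with anything in this paper.

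Your proposal has a genuine circularity. The structural claim you invoke---that $d_Z(\mu_a,\mu_b)\geq M$ forces $Z$ to occur as a domain in any hierarchy $H$ from $\mu_a$ to $\mu_b$---is precisely the Large Link Lemma of~\cite{MasurMinsky00} (their Lemma~6.2), and its proof there \emph{uses} the Bounded Geodesic Image Theorem. In~\cite{MasurMinsky00} the theorem is proved in Section~3, \emph{before} hierarchies are constructed, by a direct argument combining hyperbolicity of $\calC(S)$ with an analysis of how projection to $\calC(Z)$ behaves along a geodesic; the hierarchy machinery is then built on top of it. Running the logic in the direction you propose would require an independent proof of Large Link, and your sketched induction on $\xi(S)-\xi(Z)$ does not supply one: the Lipschitz Projection Lemma and the subordinacy relations by themselves do not force a prescribed subsurface into a hierarchy without already knowing that geodesics missing $\bdy Z$ have bounded $Z$--projection.

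Two further gaps persist even granting the circular step. First, when $Z$ is a domain in $H$, a component of $\bdy Z$ appears on \emph{some} geodesic of $H$---namely one whose domain directly contains $Z$---but not in general on the main geodesic $g_H$; your deduction fails whenever $Z$ sits inside a proper intermediate domain. Second, even if $g_H$ had a vertex missing $Z$, the property ``misses $Z$'' is not inherited under bounded $d_S$--distance: a curve within $2\delta_S$ of $\bdy Z$ can still cut $Z$, so you cannot transfer the conclusion from $g_H$ to an arbitrary geodesic $[a,b]$ by stability alone.
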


\subsection*{Marking complex}
We now discuss the {\em marking complex}, following Masur and
Minsky~\cite{MasurMinsky00}.  A {\em complete clean marking} $m$ is a
pants decomposition $\base(m)$ of $S$ together with a {\em
transversal} $t_a$ for each element $a \in \base(m)$.  To define
$t_a$, let $X_a$ be the non-pants component of $S \setminus (\base(m)
\setminus \{a\})$.  Then any vertex of $\calC(X_a)$ not equal to $a$
and meeting $a$ minimally may serve as a transversal $t_a$.  Notice
that diameter of $m$ in $\calC(S)$ is at most $2$.

Masur and Minsky also define {\em elementary moves} on markings.  The
set of markings and these moves define the {\em marking complex},
$\calM(S)$: a locally finite graph quasi-isometric to the mapping
class group.  The projection map $p \from \calM(S) \to \calC(S)$,
sending $m$ to any element of $\base(m)$, is coarsely mapping class
group equivariant.  We now record, from~\cite{MasurMinsky00}, the
Elementary Move Projection Lemma:

\begin{lemma}
\label{Lem:ElementaryI}
If $m$ and $m'$ differ by an elementary move then for any essential
subsurface $Z \subseteq S$, we have $d_Z(m, m') \leq 4$. \qed
\end{lemma}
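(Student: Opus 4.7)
The plan is to reduce to the two elementary move types and, in each case, bound $d_Z(m,m')$ by triangulating through a curve which appears in both markings.

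Recall from Masur--Minsky that an elementary move is either a \emph{twist move}, which leaves $\base(m)$ fixed and replaces a single transversal $t_a$ by $T_a^{\pm 1} t_a$ (or a half-twist $T_a^{\pm 1/2} t_a$ when $a$ is non-separating and the topology allows), or a \emph{flip move}, which interchanges a base curve $a$ with its transversal $t_a$ and then re-cleans the remaining transversals by twisting them about $a$ or $t_a$ so as to restore the clean-marking condition. In either case $m$ and $m'$ share the vast majority of their underlying curves, and the changes are concentrated near a single base curve.

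For a twist move, I would split on how the twisting curve $a$ meets $Z$. If $a$ is disjoint from $Z$ or lies on $\bdy Z$, then $T_a$ acts as the identity on isotopy classes in $Z$, so $\pi_Z(t_a)=\pi_Z(t_a')$ and hence $\pi_Z(m)=\pi_Z(m')$; the distance is then simply $\diam_Z \pi_Z(m)$, which is bounded by the uniform diameter bound for the subsurface projection of a clean marking. If instead $a$ lies in the interior of $Z$, then $a$ is a base curve of both markings, so $a \in \pi_Z(m) \cap \pi_Z(m')$; the projections $\pi_Z(t_a)$ and $\pi_Z(t_a')$ are each disjoint from $a$ after surgery (surgery arcs terminate on the frontier of an annular neighborhood of $a$, which is parallel to $a$), bounding the diameter through $a$. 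Finally, when $Z$ is the annulus about $a$, the two annular projections differ by exactly one Dehn twist, contributing $1$ to $d_Z$.

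The flip case proceeds by a parallel case analysis. The key new ingredient is that $a$ and $t_a$ intersect minimally (once or twice depending on the topology), so $d_Z(a,t_a) \leq 2$ whenever both cut $Z$; the re-cleaning adjustments are themselves twist moves, each contributing only a bounded amount by the previous case. The main obstacle will be the careful bookkeeping in the flip case: tracking the re-cleaning twists about $a$ and $t_a$, handling the subsurfaces $Z$ whose boundary happens to contain one of these curves, and confirming that the cumulative contributions never exceed $4$ in any fixed $Z$.
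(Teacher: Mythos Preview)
The paper does not supply a proof of this lemma: it is recorded (with the \qed\ mark) as the Elementary Move Projection Lemma, quoted from Masur--Minsky~\cite{MasurMinsky00}. Your case analysis---twist versus flip, then subdividing on whether the altered base curve misses $Z$, lies in $\bdy Z$, is essential in $Z$, or has $Z$ as its annular neighbourhood---is exactly the argument given in that source (see Lemma~2.5 there), so there is no alternative approach to compare against. The bookkeeping you flag in the flip case is indeed where the work lies, and the precise constant $4$ comes out only after one fixes a convention for $\pi_Z$ of a marking (Masur--Minsky take a single base curve cutting $Z$, or the transversal when $Z$ is a complementary component of $\base(m)$, rather than the union of all projections); with your implicit ``project everything'' convention the argument still goes through but the constant may come out slightly larger.
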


A converse follows from the {\em distance
estimate}~\cite{MasurMinsky00}.

\begin{lemma}
\label{Lem:ElementaryII}
For every constant $\cobound$ there is a bound $\elementary =
\elementary(\cobound, S)$ with the following property.  If $d_Z(m, m')
\leq \cobound$ for every essential subsurface $Z \subseteq S$ then
$d_\calM(m, m') \leq \elementary$. \qed
\end{lemma}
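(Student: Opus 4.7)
The plan is to invoke the Masur--Minsky distance estimate~\cite{MasurMinsky00}, which asserts that there is a threshold $k_0 = k_0(S)$ such that for each $k \geq k_0$ there is a constant $K = K(k, S)$ with
\[
\frac{1}{K}\, d_\calM(m, m') - K \;\leq\; \sum_{Z \subseteq S} [d_Z(m, m')]_k \;\leq\; K\, d_\calM(m, m') + K
\]
for all markings $m, m' \in \calM(S)$. Here the sum ranges over all essential subsurfaces of $S$ (including annuli), and $[x]_k$ denotes the cutoff that equals $x$ when $x \geq k$ and zero otherwise.

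First, I would fix the threshold $k = \max(k_0, \cobound + 1)$, producing a corresponding quasi-equivalence constant $K = K(\cobound, S)$. Next, under the hypothesis $d_Z(m, m') \leq \cobound < k$ for every essential subsurface $Z$, each cutoff term $[d_Z(m, m')]_k$ vanishes, so the entire sum is zero. The left inequality of the distance estimate then reads $\tfrac{1}{K}\, d_\calM(m, m') - K \leq 0$, which gives $d_\calM(m, m') \leq K^2$. Setting $\elementary(\cobound, S) = K^2$ finishes the argument.

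There is no genuine obstacle here: the lemma is a direct corollary of the distance estimate, used as a black box. The only subtlety is to recognize that, because the quasi-equivalence constant $K$ in the distance estimate depends on both the chosen threshold and the topology of $S$, the resulting bound $\elementary$ depends on $\cobound$ and on $S$, exactly as claimed. One could alternatively construct an explicit path in $\calM(S)$ from $m$ to $m'$ by hierarchy machinery and bound its length in terms of the subsurface projections; but since all the work needed for this is already packaged into the distance estimate, the above route is the most direct.
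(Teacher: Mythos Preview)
Your proposal is correct and matches the paper's approach exactly: the paper does not give a proof at all, but simply states the lemma with a \qed and the prefatory remark that it ``follows from the \emph{distance estimate}~\cite{MasurMinsky00}.'' Your argument spells out precisely this deduction.
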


\subsection{Tight geodesics}
The curve complex is locally infinite. Generally, there are infinitely
many geodesics connecting a given pair of points in $\calC(S)$.  In
\cite{MasurMinsky00} the notion of a {\em tight} geodesic is
introduced.  This is a technical hypothesis which provides a certain
kind of local finiteness.  \reflem{Tight} below is the only property
of tight geodesics used in this paper.

\begin{definition} 
\label{Def:Cobounded}
A pair of curves, markings or laminations $a, b$ are {\em
$\cobound$--cobounded} if $d_Z(a, b) \leq \cobound$ for all strict
subsurfaces $Z \subset S$ cut by both $a$ and $b$.
\end{definition}

Minsky shows \cite[Lemma 5.14]{Minsky03} that if $a, b \in
\overline{\calC(S)}$ then there is a tight geodesic $[a,b] \subset
\calC(S)$ connecting them. All geodesics from here on are assumed to
be tight.

\begin{lemma}[Minsky]
\label{Lem:Tight}
There is a constant $\tight = \tight(S)$ with the following property.
Suppose that $(a, b)$ is a $\cobound$--cobounded pair in
$\overline{\calC(S)}$ and $c \in [a, b]$ is a vertex of a tight
geodesic.  Then the pairs $(a, c)$ and $(c, b)$ are $(\cobound +
\tight)$--cobounded.  \qed
\end{lemma}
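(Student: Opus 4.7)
The plan is to bound $d_Z(a,c)\leq \cobound+\tight$ for every strict subsurface $Z$ cut by both $a$ and $c$, with $\tight=\image$ the constant from \refthm{Image}; the bound for $(c,b)$ then follows by symmetric reasoning.

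The core input is a structural claim about tight geodesics: \emph{on any tight geodesic $[a,b]$ the set of vertices that miss a given strict subsurface $Z$ forms a (possibly empty) contiguous subpath}. To prove this, suppose $v_{i-1},v_i,v_{i+1}$ are three consecutive vertices and both $v_{i-1}$ and $v_{i+1}$ miss $Z$. Then each admits a representative in $S\setminus\interior Z$. Since these two vertices lie at distance two in $\calC(S)$ their representatives must intersect, forcing both to lie in a common component $Y_0$ of $S\setminus\interior Z$; moreover neither representative can be peripheral in $Y_0$, since a peripheral representative would be isotopic to a component of $\partial Z$ and thus disjoint from the other curve. Hence $v_{i-1}\cup v_{i+1}$ fills a subsurface contained in $Y_0$, and in particular every boundary component of this filling is disjoint from $\interior Z$. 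By tightness $v_i$ is such a boundary component, so $v_i$ also misses $Z$.

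With this claim in hand, fix a strict subsurface $Z$ cut by both $a$ and $c$, and let $N\subseteq [a,b]$ be the contiguous subpath of vertices missing $Z$; by hypothesis $a,c\notin N$. If $b$ cuts $Z$ then $N$ lies strictly in the interior of $[a,b]$, and $c$ lies in one of the two subpaths of $[a,b]\setminus N$. If $c$ is on the side containing $a$, then every vertex of $[a,c]$ cuts $Z$ and \refthm{Image} yields $d_Z(a,c)<\image$. If $c$ is on the side containing $b$, then \refthm{Image} gives $d_Z(c,b)<\image$; combining this with the coboundedness bound $d_Z(a,b)\leq \cobound$ and the triangle inequality in $\calC(Z)$ yields $d_Z(a,c)\leq \cobound+\image$. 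If instead $b$ misses $Z$, then $N$ terminates at $b$, so $c$ lies strictly before $N$ and once more every vertex of $[a,c]$ cuts $Z$, giving $d_Z(a,c)<\image$. In every case $d_Z(a,c)\leq \cobound+\image$, as required.

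The main obstacle is the structural claim itself. The delicate step is verifying that the subsurface filled by $v_{i-1}\cup v_{i+1}$ genuinely remains inside the single component $Y_0\subseteq S\setminus\interior Z$, so that the disk- and annulus-capping operations used to construct the filling do not escape $Y_0$ by crossing $\partial Z$. Once this is in place, the remaining case analysis is routine, combining the coboundedness of $(a,b)$ with \refthm{Image} applied to the two tight subgeodesics flanking the missing subpath $N$.
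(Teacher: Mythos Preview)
The paper does not give a proof of this lemma; it is stated with a \qed\ and attributed to Minsky~\cite{Minsky03}.  Your argument therefore supplies what the paper omits, and the overall strategy---reduce to the Bounded Geodesic Image Theorem via a structural claim about which vertices of a tight geodesic miss $Z$---is the right one and is essentially how the result is extracted from the machinery in~\cite{MasurMinsky00, Minsky03}.

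There is, however, a genuine gap in your proof of the structural claim.  You show that if $v_{i-1}$ and $v_{i+1}$ both miss $Z$ then so does $v_i$; but this ``no gap of length one'' condition does not by itself force $N=\{i:v_i\text{ misses }Z\}$ to be an interval.  The pattern $N=\{1,4\}$, for instance, satisfies your implication vacuously.  What is missing is the elementary observation that $N$ has diameter at most~$2$: if $v_i$ and $v_j$ both miss $Z$ then both are disjoint from any fixed component $\gamma$ of $\partial Z$, so $d_S(v_i,v_j)\leq d_S(v_i,\gamma)+d_S(\gamma,v_j)\leq 2$ and hence $|i-j|\leq 2$.  Once $N$ has at most three elements, your implication rules out the only non-contiguous possibility $\{j,j+2\}$, and contiguity follows.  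You should also spell out the ``delicate step'' you flag at the end: the fill of $v_{i-1}\cup v_{i+1}$ stays inside $Y_0$ because every component of $\partial Z$ is essential and non-peripheral in $S$, hence cannot lie in a disk or boundary-parallel annulus component of the complement of $v_{i-1}\cup v_{i+1}$, and so is not swallowed by the capping operation.  With these two additions your proof is complete.
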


\section{Extension Lemmas}
We now examine how points of $\calC(S)$ can be connected to infinity.  

\begin{lemma}[Completion]
\label{Lem:Completion}
There is a constant $\completion = \completion(S)$ with the following
property.  Suppose that $b \in \calC(S)$ and $\ell \in
\overline{\calC(S)}$.  Suppose that the pair $(b, \ell)$ is
$\cobound$--cobounded.  Then there is a marking $m$ so that $b \in
\base(m)$ and $(m, \ell)$ are $(\cobound + \completion)$--cobounded.
\qed
\end{lemma}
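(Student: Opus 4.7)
My plan is to construct $m$ in two phases: first extend $b$ to a pants decomposition, then add transversals, making each choice agree with the relevant projection of $\ell$. For Phase~1, extend $\{b = b_0\}$ inductively to a pants decomposition $\base(m) = \{b_0, \ldots, b_{\xi(S) - 1}\}$: at stage $i$, pick a non-pants component $Y$ of $S \setminus \{b_0, \ldots, b_i\}$ and take $b_{i+1}$ to be a vertex of $\pi_Y(\ell) \subset \calC(Y)$ when $\ell$ cuts $Y$, and otherwise any essential curve in $Y$. For Phase~2, let $X_i$ be the non-pants component of $S \setminus (\base(m) \setminus \{b_i\})$ and choose the transversal $t_i \in \calC(X_i)$ to meet $b_i$ minimally and to satisfy that $\pi_{X_i}(t_i)$ is close to $\pi_{X_i}(\ell)$ in $\calC(X_i)$ and $\pi_{A_{b_i}}(t_i)$ is close to $\pi_{A_{b_i}}(\ell)$ in the annular complex, whenever these projections are defined; such a $t_i$ exists by approximating $\ell$ in $X_i$ by a simple closed curve controlling its $\calC(X_i)$--image and its twist around $b_i$ simultaneously.

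To verify that $(m, \ell)$ is $(\cobound + \completion)$--cobounded, fix a strict subsurface $Z$ cut by both $m$ and $\ell$. Since $d_Z(m, \ell) = \diam_Z(\pi_Z(m) \cup \pi_Z(\ell))$, it suffices to uniformly bound $d_Z(a, \ell)$ for every element $a$ of $\base(m) \cup \{t_i\}$ that cuts $Z$. For $a = b$, this is the hypothesis. Otherwise $a$ was chosen close to $\pi_Y(\ell)$ for some scaffolding subsurface $Y$ (a Phase~1 complementary component, some $X_j$, or some $A_{b_j}$) containing $a$. If $Z \subseteq Y$, I apply the nested projection estimate described below. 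If $Z \not\subseteq Y$, then $Z$ crosses some component of $\bdy Y$, which is a curve $b_j \in \base(m)$ selected at an earlier stage; since $a$ and $b_j$ are disjoint in $S$ and both cut $Z$, \reflem{LipschitzProjection} yields $d_Z(a, b_j) \leq 2$, and I reduce to bounding $d_Z(b_j, \ell)$ by induction on construction stage (the base case being $a = b$).

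The main obstacle is the nested projection estimate: if $Z \subseteq Y$, $\ell$ cuts $Z$, and $a$ is close to $\pi_Y(\ell)$ in $\calC(Y)$, then $d_Z(a, \ell) = O(1)$. This holds because $a$ arises from $\ell$ by surgery at $\bdy Y$, so the projections $\pi_Z(a)$ and $\pi_Z(\ell)$ are computed by further surgery at $\bdy Z$ from essentially the same arcs. Formally, one bounds $d_Z(a, \pi_Y(\ell))$ by applying \reflem{LipschitzProjection} to a short path in $\calC(Y)$ joining $a$ to a representative of $\pi_Y(\ell)$ (all vertices cutting $Z$ because $\pi_Y(\ell)$ has uniformly bounded diameter and $\ell$ cuts $Z \subseteq Y$), and one bounds $d_Z(\pi_Y(\ell), \ell)$ using this same bounded diameter together with the standard comparison of surgery projections at nested boundaries. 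Since only finitely many scaffolding subsurfaces appear during the construction (at most a constant times $\xi(S)$), the accumulated constant $\completion$ depends only on $S$.
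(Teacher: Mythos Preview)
Your construction is correct and is precisely the one the paper invokes (the paper gives no proof, citing instead the construction preceding Lemma~6.1 of Behrstock's thesis): extend $b$ to a marking by inductively choosing each new curve from the subsurface projection of $\ell$ to the relevant complementary piece, and verify coboundedness via the nested--projection estimate together with an induction reducing to earlier base curves. One small wording issue in Phase~2: you cannot in general make $t_i$ close to $\pi_{X_i}(\ell)$ by a constant independent of $\cobound$ (a transversal must be a Farey neighbor of $b_i$), but this is harmless, since your Phase~1 argument already yields $d_{X_i}(b_i,\ell)\le\cobound+O(1)$ and hence $d_{X_i}(t_i,\ell)\le\cobound+O(1)$ for any transversal, leaving the choice among Farey neighbors of $b_i$ free to control $d_{A_{b_i}}(t_i,\ell)$.
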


The existence of the marking $m$ follows from the construction
preceding \cite[Lemma 6.1]{Behrstock06}.

\begin{lemma}[Extension past a point]
\label{Lem:DistanceOne}
Suppose that $a, z \in \calC(S)$ with $a \neq z$.  Then there is a
point $\ell \in \bdy \calC(S)$ so that the vertex $a$ lies in the
one-neighborhood of $[z, \ell]$.
\end{lemma}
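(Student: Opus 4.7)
The plan is to apply the Bounded Geodesic Image Theorem (\refthm{Image}) to the annular subsurface $Y$ whose core curve is $a$, which will produce a vertex of $[z,\ell]$ that is disjoint from $a$.

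I first dispose of the trivial case $d_S(z,a) = 1$: here $z$ itself lies on $[z,\ell]$ for any $\ell \in \bdy \calC(S)$ (and the boundary is nonempty by \refthm{Gabai}), so $d_S(a,z) \leq 1$ already places $a$ in the one-neighborhood of $[z,\ell]$. Hence I may assume $d_S(z,a) \geq 2$, equivalently that $z$ intersects $a$ essentially; this ensures that the annular projection $\pi_Y(z)$ is defined and has uniformly bounded diameter.

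Next I choose any $\ell_0 \in \bdy \calC(S)$. Since $\ell_0$ is a filling lamination it cuts every essential subsurface, so $\pi_Y(\ell_0)$ is defined. Let $T_a$ denote the Dehn twist about $a$, which is an isometry of $\calC(S)$ preserving $\EL(S) = \bdy \calC(S)$, so that $T_a^n \ell_0 \in \bdy \calC(S)$ for every $n$. On the annular complex $\calC(Y)$, which is quasi-isometric to $\ZZ$, the element $T_a$ acts as translation by one, so $d_Y(\ell_0, T_a^n \ell_0)$ grows linearly in $|n|$. The triangle inequality then gives
\[
d_Y(z, T_a^n \ell_0) \to \infty \quad \text{as } |n| \to \infty.
\]
I fix $n$ large enough that $\ell \defeq T_a^n \ell_0$ satisfies $d_Y(z, \ell) \geq \image$.

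Finally I invoke \refthm{Image} for the points $z, \ell \in \overline{\calC(S)}$ and the strict subsurface $Y$. Because $d_Y(z,\ell) \geq \image$, some vertex $v$ of the tight geodesic $[z,\ell]$ must fail to cut $Y$. But a curve fails to cut the annular neighborhood of $a$ precisely when it is disjoint from $a$ or equal to $a$, so $d_S(v,a) \leq 1$; therefore $a$ lies in the one-neighborhood of $[z,\ell]$, as required.

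The only technical point worth flagging is verifying that iterating $T_a$ really does drive $d_Y(z, T_a^n\ell_0)$ past $\image$; this is the standard equivariance $\pi_Y \circ T_a = T_a \circ \pi_Y$ combined with the fact that $T_a$ acts as a unit translation on $\calC(Y)$. The rest is bookkeeping around the case $d_S(z,a) = 1$ and the observation that filling laminations always project to $Y$.
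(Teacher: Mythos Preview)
Your proof is correct and follows the same overall strategy as the paper: force a large subsurface projection and invoke the contrapositive of the Bounded Geodesic Image Theorem to find a vertex of $[z,\ell]$ that misses the subsurface and hence lies in the one-neighborhood of $a$.

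The only difference is in the choice of subsurface and twisting element. The paper takes $Y$ to be a component of $S\setminus a$ that meets $z$, picks a mapping class $\phi$ supported in $Y$ with translation distance at least $2\image+2$ in $\calC(Y)$, and argues by the triangle inequality that for any $k\in\bdy\calC(S)$ one of $d_Y(z,k)$, $d_Y(z,\phi(k))$ is at least $\image$. You instead take $Y$ to be the annular neighborhood of $a$ and iterate the Dehn twist $T_a$. Your choice is arguably tidier: the statement ``$v$ misses the annulus about $a$'' is literally ``$d_S(v,a)\leq 1$'', so no case analysis on the topology of $S\setminus a$ is needed, and the fact that $T_a$ translates $\calC(Y)$ is elementary. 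The paper's version avoids the separate treatment of the case $d_S(z,a)=1$, since its $Y$ is chosen to meet $z$ from the start; but this is a negligible trade-off.
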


\begin{proof}
Let $k \in \bdy \calC(S)$ be any lamination. Let $Y$ be a component of
$S \setminus a$ that meets $z$. Pick any mapping class $\phi$ with
support in $Y$ and with translation distance at least $(2\image + 2)$
in $\calC(Y)$.  We have either
$$
d_Y(z, k) \geq \image \quad\text{or}\quad d_Y(z, \phi(k)) \geq \image.
$$ By \refthm{Image}, at least one of the geodesics $[z, k]$ or $[z,
\phi(k)]$ passes through the one-neighborhood of $a$.
\end{proof}

\begin{proposition}[Extension past a marking]
\label{Prop:Extension}
There is a constant $\extension = \extension(S)$ such that if $m$ is a
marking on $S$, then there are laminations $k$ and $\ell$ such that
the pairs $(k, \ell)$, $(k, m)$ and $(m, \ell)$ are
$\extension$--cobounded and $[k, \ell]$ passes through the
one-neighborhood of $m$.
\end{proposition}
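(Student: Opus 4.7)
Let $a = p(m) \in \base(m)$, so that any vertex of $\calC(S)$ at distance at most $1$ from $a$ lies in the one-neighborhood of~$m$. Let $A$ denote the annular subsurface with core~$a$. The plan is to invoke \refthm{Image} on the annulus $A$: if we can arrange $d_A(k, \ell) \geq \image(S)$ while also making $(k, \ell)$, $(k, m)$, and $(m, \ell)$ all cobounded by a universal constant, then some vertex of the tight geodesic $[k, \ell]$ must fail to cut $A$, and such a vertex is disjoint from (or equal to)~$a$, hence within distance~$1$ of~$m$.

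First I would construct a preliminary pair $k_0, \ell_0 \in \bdy \calC(S)$ that is pairwise cobounded and also cobounded with $m$ by some universal constant $\cobound_0 = \cobound_0(S)$. Such a pair should come from combining \reflem{DistanceOne} with \reflem{Completion}: \reflem{DistanceOne} produces an ending lamination whose geodesic passes within one of any specified curve, with freedom coming from conjugation by a pseudo-Anosov supported in a complementary subsurface; \reflem{Completion} then promotes cobounded curve-lamination pairs to cobounded marking-lamination pairs, so by choosing the pseudo-Anosov seed to match the hierarchical structure of~$m$, the completed marking can be arranged to coincide (up to bounded $\calM(S)$-distance) with our given~$m$. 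Repeating the construction on the other side of $a$ yields the companion~$k_0$. To force $d_A(k, \ell) \geq \image$ while $(k_0, m), (m, \ell_0)$ remain cobounded, I would then modify the preliminary pair by incorporating appropriate twisting around~$a$ into the supporting pseudo-Anosovs (or compose with an opposite-sign pair of Dehn twists $T_a^{\pm N}$ for suitable $N$), aiming to separate the $\pi_A$-projections of $k$ and $\ell$ by at least $\image$ while perturbing other subsurface projections only by a universal additive amount.

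The main obstacle is the preliminary step of producing $(k_0, \ell_0)$ cobounded with the pre-specified~$m$. \reflem{Completion} constructs a marking from a cobounded pair rather than the reverse, so matching the completed marking back to a given~$m$ is the delicate point, essentially a combinatorial version of finding a Teichm\"uller geodesic through~$m$ with cobounded endpoints. I expect the argument to exploit the hierarchical flexibility of the Completion construction: start from a pseudo-Anosov stable lamination, apply \reflem{Completion} to extract a candidate marking, and iteratively correct the pseudo-Anosov seed using the freedom in \reflem{DistanceOne} until the completed marking agrees with~$m$ up to bounded distance in~$\calM(S)$. A secondary obstacle is that the final twisting adjustment must preserve coboundedness in annular subsurfaces whose cores meet~$a$, where Dehn twists around~$a$ act with unbounded translation length; this requires either absorbing the twisting into the pseudo-Anosov construction at the outset or balancing it against the initial projections of $k_0, \ell_0$.
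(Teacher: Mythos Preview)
Your proposal has a genuine gap at precisely the point you yourself flag: producing laminations $k_0,\ell_0$ that are cobounded with a \emph{prescribed} marking $m$. Your suggested fix --- ``iteratively correct the pseudo-Anosov seed using the freedom in \reflem{DistanceOne} until the completed marking agrees with~$m$'' --- is not a proof but a wish. \reflem{DistanceOne} gives no control over subsurface projections, and \reflem{Completion} builds a marking from a lamination rather than the reverse; nothing in your outline explains why the iteration converges or even improves. The secondary obstacle you raise (twisting around $a$ destroying coboundedness in annuli whose cores meet $a$) is equally real and equally unaddressed.

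The paper's argument sidesteps all of this with a single observation you are missing: the mapping class group acts cocompactly on $\calM(S)$, so there are only finitely many markings up to the $\MCG(S)$--action. For one representative $m$ in each orbit, take \emph{any} pseudo-Anosov $\phi$ whatsoever; its stable and unstable laminations $k,\ell$ are automatically cobounded (this is essentially the definition of pseudo-Anosov behavior in subsurfaces), and the three coboundedness constants for $(k,\ell)$, $(k,m)$, $(m,\ell)$ exist simply because $m$ is a fixed finite object. One then conjugates so that the axis $[k,\ell]$ passes through the one-neighborhood of $m$, which only requires moving a single vertex of the axis to be disjoint from some $a\in\base(m)$. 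Since the constants are conjugation-invariant, the same bound works across each orbit, and $\extension$ is the maximum over the finitely many orbits. No iterative construction, no balancing of twists, no inversion of \reflem{Completion} is needed.
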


\begin{proof}
There are only finitely many markings up to the action of the mapping
class group.  Fix a class of markings and pick a representative $m$.
We will find a pseudo-Anosov map with stable and unstable laminations
$k$ and $\ell$ such that $[k, \ell]$ passes through the
one-neighborhood of $m$.  This suffices to prove the proposition:
there is a constant $\extension(m)$ large enough so that the pairs
$(k, \ell)$, $(k, m)$ and $(m, \ell)$ are $\extension(m)$--cobounded.
The same constant works for every marking in the orbit $\MCG(S) \cdot
m$, by conjugation.  We can now take $\extension$ to be the maximum of
the $\extension(m)$ as $m$ ranges over the finitely many points of the
quotient $\calM(S)/\MCG(S)$.

So choose any pseudo-Anosov map $\phi'$ with stable and unstable
laminations $k'$ and $\ell'$.  Choose any point $b' \in [k', \ell']$.
We may conjugate $\phi'$ to $\phi$, sending $(k', \ell', b')$ to $(k,
\ell, b)$, so that $b$ is disjoint from some curve $a \in \base(m)$.
This finishes the proof.
\end{proof}

\section{The shell is connected}

Let $\calB(z, \radius)$ be the ball of radius $\radius$ about $z \in
\calC(S)$. The difference of concentric balls is called a {\em shell}.

\begin{proposition}
\label{Prop:ShellConnected}
Suppose that $\xi(S) \geq 2$ and $\distance \geq \max \{ \delta_S, 1
\}$.  Then, for any $\radius \geq 0$, the shell
$$
\calB(z, \radius + 2\distance) \setminus \calB(z, \radius - 1)
$$
is connected.  
\end{proposition}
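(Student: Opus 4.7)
My plan combines three ingredients: the connectedness of $\bdy \calC(S)$ from \refthm{Gabai}, the Extension \reflem{DistanceOne}, and the $\delta_S$--slim triangle property of $\calC(S)$ (using the hypothesis $\distance \geq \delta_S$). Write $T = \calB(z, \radius + 2\distance) \setminus \calB(z, \radius - 1)$ for the shell. For each $\ell \in \bdy \calC(S)$, I pick a tight geodesic ray $\gamma_\ell$ from $z$ to $\ell$ and let $v_\ell \in \gamma_\ell$ be the vertex at distance $\radius + \distance$ from $z$; note that $v_\ell$ lies in $T$. The three steps of the argument will be: (1) $v_\ell$ and $v_{\ell'}$ belong to the same component of $T$ whenever $\ell, \ell'$ are close in the Gromov topology; (2) by Gabai's theorem, all the $v_\ell$'s belong to a single component of $T$; (3) every $x \in T$ meets that component.

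For Step~1, if the Gromov product $\langle \ell \mid \ell' \rangle_z$ exceeds (say) $\radius + \distance + 2\delta_S$, then the standard slim-triangle fellow-travelling estimate gives $d(v_\ell, v_{\ell'}) \leq \delta_S$. A midpoint estimate then places every vertex $w$ on any geodesic $[v_\ell, v_{\ell'}]$ at distance from $z$ within $\delta_S/2$ of $\radius + \distance$, and since $\distance \geq \delta_S$ this interval sits inside $[\radius, \radius + 2\distance]$. Hence the connecting geodesic stays inside $T$.

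For Step~2, fix $\ell_0 \in \bdy \calC(S)$ and let $A$ be the set of $\ell$ with $v_\ell$ in the same component of $T$ as $v_{\ell_0}$. Step~1 shows that both $A$ and its complement are open in the Gromov topology on $\bdy\calC(S)$, so $A$ is clopen. Since $\bdy \calC(S)$ is connected by \refthm{Gabai}, $A = \bdy \calC(S)$, and the family $\{ v_\ell \}_{\ell \in \bdy\calC(S)}$ lies in a single component of $T$.

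For Step~3, given $x \in T$ I apply \reflem{DistanceOne} to obtain $\ell_x \in \bdy \calC(S)$ with $x$ in the one-neighborhood of $\gamma_{\ell_x}$, and pick $u \in \gamma_{\ell_x}$ with $d(x, u) \leq 1$. In the bulk case $d(z, u) \in [\radius, \radius + 2\distance]$, we have $u \in T$, the vertices $x$ and $u$ are adjacent or equal, and the subpath of $\gamma_{\ell_x}$ from $u$ to $v_{\ell_x}$ has all its vertices at distance from $z$ between $d(z, u)$ and $\radius + \distance$, so stays in $T$; gluing these gives the required path. The hardest part of the argument will be the edge cases $d(z, u) \in \{\radius - 1, \radius + 2\distance + 1\}$, when $u$ falls just outside $T$. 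To handle them I plan to revisit the proof of \reflem{DistanceOne} and exploit the flexibility in the choice of complementary subsurface $Y$ of $x$ and pseudo-Anosov $\phi$ supported in $Y$ to force the witness $u$ to equal $x$ itself (which trivially lies in $T$), at which point the bulk-case argument applies. Combining Steps~2 and~3, every $x \in T$ lies in the unique component of $T$ containing the family $\{ v_\ell \}$, so $T$ is connected.
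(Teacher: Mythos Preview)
Your overall strategy matches the paper's: rays to the boundary, slim triangles to link points on nearby rays, \refthm{Gabai} for global connectedness, and \reflem{DistanceOne} to attach an arbitrary shell point to some ray. Steps~1 and~2 are essentially the paper's argument (the paper packages the clopen reasoning slightly differently, via chains $k_0,\ldots,k_N$, but the content is the same).

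The gap is in Step~3. Your plan for the edge cases --- revisiting the proof of \reflem{DistanceOne} and choosing $Y$ and $\phi$ so as to force $u = x$ --- does not work in general. The conclusion of that lemma comes from the Bounded Geodesic Image Theorem: some vertex of $[z,\ell]$ misses $Y$, hence lies in the closure of $S \setminus Y$ and is therefore disjoint from $x$. But such a vertex need not equal $x$; it can be any curve on the other side of $x$. When $x$ separates $S$ into two pieces each of complexity at least one (the separating curve in $S_2$, or a curve cutting $S_{0,6}$ into two copies of $S_{0,4}$), no choice of $Y$ or $\phi$ pins down $u = x$. The paper handles this differently: before invoking \reflem{DistanceOne} it uses \reflem{NoDeadEnds} (to push outward) and geodesics toward $z$ (to push inward) to move the given point, staying inside the shell, to a vertex at the middle radius $\radius + \distance$. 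After this preprocessing the witness $u$ automatically satisfies $d_S(z,u) \in [\radius + \distance - 1, \radius + \distance + 1] \subset [\radius, \radius + 2\distance]$ (using $\distance \geq 1$), so the edge cases vanish. You should use this device instead. A smaller issue: \reflem{DistanceOne} only places $x$ near \emph{some} geodesic $[z,\ell_x]$, not your pre-chosen $\gamma_{\ell_x}$; you need a sentence reconciling the two (the point at radius $\radius + \distance$ on the new geodesic is within $2\delta_S$ of $v_{\ell_x}$, and a short geodesic between them stays in the shell).
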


Below we will only need the corollary that $\calC(S) \setminus
\calB(z, \radius - 1)$ is connected.  However, the shell has other
interesting geometric properties.  We hope to return to this subject
in a future paper.

One difficulty in the proof of \refprop{ShellConnected} lies in
pushing points of the inner boundary into the interior of the shell.
To deal with this we use the fact that $\calC(S)$ has no {\em dead
ends}.

\begin{lemma}
\label{Lem:NoDeadEnds}
Fix vertices $z, a \in \calC(S)$.  Suppose $d_S(z, a) = \radius$.  Then
there is a vertex $a' \in \calC(S)$ with $d_S(a, a') \leq 2$ and
$d_S(z, a') = \radius + 1$.  \qed
\end{lemma}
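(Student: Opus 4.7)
The plan is to apply \reflem{DistanceOne} to the pair $(a, z)$ to produce a tight infinite geodesic ray from $z$ that passes close to $a$, and then take $a'$ to be the vertex at distance $\radius + 1$ along this ray.

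In more detail: by \reflem{DistanceOne}, there is $\ell \in \bdy \calC(S)$ such that $a$ lies in the one-neighborhood of the tight geodesic $R = [z, \ell]$. Let $v_0 = z, v_1, v_2, \ldots$ be the consecutive vertices of $R$, and pick an index $i$ with $d_S(a, v_i) \leq 1$. Combining $d_S(z, a) = \radius$ with the triangle inequality forces $i \in \{\radius - 1, \radius, \radius + 1\}$. Set $a' \defeq v_{\radius + 1}$; then $d_S(z, a') = \radius + 1$ holds automatically, and
\[
d_S(a, a') \leq d_S(a, v_i) + d_S(v_i, v_{\radius + 1}) \leq 1 + \bigl| i - (\radius + 1) \bigr|,
\]
which is bounded by $2$ whenever $i \geq \radius$.

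The only obstacle is the residual case $i = \radius - 1$, where the above estimate gives only $3$. To rule it out, refine the proof of \reflem{DistanceOne}: the vertex $v_i$ is produced by the Bounded Geodesic Image Theorem (\refthm{Image}) applied to a component $Y$ of $S \setminus a$, and is a vertex of $R$ disjoint from $Y$. When $a$ is non-separating, the only essential non-peripheral curve disjoint from $Y$ is $a$ itself, so $v_i = a$ and therefore $i = \radius$. When $a$ is separating, write $S \setminus a = Y \disjoint Y'$ and construct a filling lamination $\ell \in \EL(S)$ whose restrictions to $Y$ and $Y'$ are stable laminations of pseudo-Anosovs supported on the respective subsurfaces, chosen so that $d_Y(z, \ell) \geq \image$ and $d_{Y'}(z, \ell) \geq \image$ simultaneously; then the tight geodesic $R$ must contain a vertex disjoint from both $Y$ and $Y'$, which is necessarily $a$ itself.

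The main technical hurdle is the separating subcase: arranging for a \emph{single} vertex of $R$ to be simultaneously disjoint from both $Y$ and $Y'$, rather than merely having two possibly distinct vertices each disjoint from one of them. This requires using the hierarchical structure of tight geodesics, since the curve $a = \bdy Y = \bdy Y'$ should be realised on the main geodesic whenever both complementary subsurfaces witness large subsurface projections.
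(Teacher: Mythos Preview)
The paper does not supply its own proof of \reflem{NoDeadEnds}; it simply cites Proposition~3.1 of \cite{Schleimer06c}.  That reference gives an elementary argument using only the local combinatorics of $\calC(S)$, with no appeal to the Gromov boundary.  So there is no in-paper argument to match, and your route through \reflem{DistanceOne} is necessarily a different (and heavier) approach.

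Your argument is clean when $a$ is non-separating: the vertex of $[z,\ell]$ missing $Y = S \setminus a$ must equal $a$, forcing $i = \radius$.  The separating case, however, is not finished.  First, your construction of $\ell$ is not right as written: a lamination whose ``restrictions'' to $Y$ and $Y'$ are stable laminations of partial pseudo-Anosovs has $a$ in its complement and is not filling, so it is not a point of $\EL(S)$.  (One can repair this by applying high powers of partial pseudo-Anosovs on $Y$ and then on $Y'$ to an arbitrary ending lamination.)  Second, even granting such an $\ell$, your conclusion that $a$ itself lies on the tight geodesic $[z,\ell]$ invokes hierarchy machinery that is neither stated nor proved in this paper; you flag this yourself but do not resolve it.

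There is a much lighter fix that avoids hierarchies entirely.  For $\radius \geq 2$ the curve $z$ meets $a$ and hence cuts both components $Y, Y'$ of $S \setminus a$.  Arrange (as above) that $d_Y(z,\ell) \geq \image$ and $d_{Y'}(z,\ell) \geq \image$; then \refthm{Image} produces indices $i, j$ with $v_i$ missing $Y$ and $v_j$ missing $Y'$.  Each of $v_i, v_j$ is disjoint from $a$, so $i, j \in \{\radius-1, \radius, \radius+1\}$.  If $i = j$ then $v_i$ misses both pieces and must equal $a$, giving $i = \radius$.  If $i \neq j$ then $\max(i,j) \geq \radius$, and the corresponding vertex lies within $1$ of $a$; hence $a' = v_{\radius+1}$ lies within $2$ of $a$.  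The residual cases $\radius \leq 1$ are immediate by hand (for $\radius = 1$, take $a'$ inside the component of $S \setminus a$ containing $z$, chosen to intersect $z$).  This completes your strategy without any appeal to the structure of tight geodesics.
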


Note that this implies that any geodesic $[a, a']$ lies outside of
$\calB(z, \radius - 1)$.  For a proof of \reflem{NoDeadEnds}, see
Proposition~3.1 of~\cite{Schleimer06c}.

\begin{proof}[Proof of \refprop{ShellConnected}]
For any $z \in \calC(S)$ and any geodesic or geodesic segement $[a,b]
\subset \overline{\calC(S)}$ define $d_S(z, [a,b]) = \min \{ d_S(z, c)
\st c \in [a,b] \}$.  Define a product on $\overline{\calC(S)}$ by:
\[
\gprod{a,b}_z = \inf \big\{ d_S(z, [a,b]) \big\}
\]
where the infimum ranges over all geodesics $[a,b]$.  For every $k \in
\bdy \calC(S)$ let
\[
U(k) = \{ \ell \in \bdy \calC(S) \st \gprod{k, \ell}_z 
                                 > \radius + 2\distance \}.
\] 
The set $U(k)$ is a neighborhood of $k$ by the definition of the
topology on the boundary~\cite{Gromov87}.  Notice that if $\ell \in
U(k)$ then $k \in U(\ell)$.

Consider the set $V(k)$ of all $\ell \in \bdy \calC(S)$ so that there
is a finite sequence $k = k_0, k_1, \ldots, k_N = \ell$ with $k_{i+1}
\in U(k_i)$ for all $i$.  Now, if $\ell \in V(k)$ then $U(\ell)
\subset V(k)$; thus $V(k)$ is open.  If $\ell$ is a limit point of
$V(k)$ then there is a sequence $\ell_i \in V(k)$ entering every
neighborhood of $\ell$.  So there is some $i$ where $\ell_i \in
U(\ell)$.  Thus $\ell \in U(\ell_i) \subset V(k)$ and we find that
$V(k)$ is closed.  Finally, as $\bdy \calC(S)$ is connected
(\refthm{Gabai}), $V(k) = \bdy \calC(S)$.

Let $a', b'$ be any vertices in the shell $\calB(z, \radius +
2\distance) \setminus \calB(z, \radius - 1)$.  We connect $a'$, via a
path in the shell, to a vertex $a$ so that $d_S(z, a) = \radius +
\distance$.  This is always possible: points far from $z$ may be
pushed inward along geodesics and points near $z$ may be pushed
outward by \reflem{NoDeadEnds}.  Similarly connect $b'$ to $b$.

By \reflem{DistanceOne} there are points $k, \ell \in \bdy \calC(S)$
so that there are geodesic rays $[z, k]$ and $[z, \ell]$ within
distance one of $a$ and $b$ respectively. Connect $k$ to $\ell$ by a
chain of points $\{ k_i \}$ in $V(k)$, as above.  Define $a_i \in [z,
k_i]$ so that $d_S(z, a_i) = \radius + \distance$.  Connect $a$ to
$a_0$ via a path of length at most $2$.

Notice that $d_S(a_i, [k_i, k_{i+1}]) > \distance \geq \delta$.  As
triangles are slim, the vertex $a_i$ is $\delta$--close to $[z,
k_{i+1}]$.  Thus $a_i$ and $a_{i+1}$ may be connected inside of the
shell via a path of length at most $2\delta$.
\end{proof}

\section{Image of a cobounded geodesic is cobounded}

We begin with a simple lemma: 

\begin{lemma}
\label{Lem:FinitelyMany}
For every $\cobound$ and $\radius$ there is a constant $\error$ with
the following property.  Let $[a, b] \subset \calC(S)$ be a geodesic
segment of length $2\radius$ with $(a, b)$ being
$\cobound$--cobounded.  Let $z$ be the midpoint. Then there is a path
$P$ of length at most $\error$ connecting $a$ to $b$ outside of
$\calB(z, \radius - 1)$.
\end{lemma}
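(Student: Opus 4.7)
The plan is to reduce to finitely many cases via the mapping class group action and then invoke \refprop{ShellConnected}.

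First I would extend the cobounded curve pair $(a,b)$ to a cobounded marking pair. Applying \reflem{Completion} to $(a, b)$ yields a marking $m_a$ with $a \in \base(m_a)$ and $(m_a, b)$ being $(\cobound + \completion)$-cobounded; applying it again to $(b, m_a)$ produces a marking $m_b$ with $b \in \base(m_b)$ and $(m_a, m_b)$ being $\cobound_0$-cobounded, where $\cobound_0 \defeq \cobound + 2\completion$. For any strict subsurface $Z \subsetneq S$ cut by both markings, $d_Z(m_a, m_b) \leq \cobound_0$ by definition; for $Z = S$ we have $d_S(m_a, m_b) \leq d_S(a, b) + 2 = 2\radius + 2$ since bases of markings have diameter at most one in $\calC(S)$. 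Setting $\cobound_1 \defeq \max\{\cobound_0,\, 2\radius + 2\}$ and applying \reflem{ElementaryII} yields a constant $D = \elementary(\cobound_1, S)$ with $d_{\calM(S)}(m_a, m_b) \leq D$.

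Since $\MCG(S)$ acts cocompactly on the locally finite graph $\calM(S)$, there are only finitely many $\MCG(S)$-orbits of ordered marking pairs within marking-complex distance $D$. Combined with the fixed cardinalities $|\base(m_a)|$ and $|\base(m_b)|$, and the local finiteness of tight geodesics between any two vertices of $\calC(S)$ (which permits only finitely many midpoints $z$), the collection of configurations $(a, z, b)$ satisfying the hypotheses of the lemma decomposes into finitely many $\MCG(S)$-orbits. For each orbit representative, \refprop{ShellConnected} (applied with $\distance \defeq \max\{\delta_S, 1\}$) ensures that $a$ and $b$ both lie in the connected shell $\calB(z, \radius + 2\distance) \setminus \calB(z, \radius - 1)$, and so are joined by a path of some finite length entirely within this shell. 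Setting $\error$ to be the maximum of these lengths over the finite list of representatives, the $\MCG(S)$-equivariance of $\calC(S)$ and of its shells gives the desired uniform bound for every configuration satisfying the hypotheses.

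The main obstacle is the quantitative distance bound $d_{\calM(S)}(m_a, m_b) \leq D$; this is what converts the qualitative finiteness of marking-pair orbits at bounded marking distance into a uniform bound on path lengths. The key tool in this step is \reflem{ElementaryII}, which converts coboundedness together with the bounded $\calC(S)$-distance into a bound in $\calM(S)$; without it, no cocompactness argument would yield the universal constant $\error$ that the lemma demands.
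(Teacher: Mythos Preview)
Your argument is correct and follows the same overall plan as the paper: reduce to finitely many $\MCG(S)$--orbits of triples $(a,z,b)$, then invoke \refprop{ShellConnected}. The paper obtains the finiteness in one line by citing the Masur--Minsky fact that there are only finitely many hierarchies of bounded total length; you instead pass to markings and use \reflem{ElementaryII} (the distance estimate) together with cocompactness of the $\MCG(S)$--action on $\calM(S)$. These are two packagings of the same hierarchy machinery, and your version has the virtue of staying entirely within the lemmas already stated in the paper.

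Two small technical points. First, your second invocation of \reflem{Completion} feeds it a marking $m_a$ in the role of $\ell$, whereas the lemma as stated asks for $\ell \in \overline{\calC(S)}$; the fix is easy (apply Completion to $(b,a)$ instead, then bound $d_Z(m_a,m_b)$ by a triangle inequality through $a$ or $b$, using that a marking has uniformly bounded projection diameter), but it should be said. Second, the paper records the diameter of a marking in $\calC(S)$ as at most $2$, not $1$; this only shifts your constant $\cobound_1$ and is harmless.
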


\begin{proof}
There are only finitely many such triples $(a, z, b)$, up the action
of the mapping class group.  (This is because there are only finitely
many hierarchies having total length less than a given upper bound;
see~\cite{MasurMinsky00}).  The conclusion now follows from the
connectedness of the shell (\refprop{ShellConnected}).
\end{proof}

Note that any quasi-isometric embedding $\phi \from \calC(S) \to
\calC(\Sigma)$ extends to a one-to-one continuous map from $\bdy
\calC(S)$ to $\bdy \calC(\Sigma)$.

\begin{theorem}
\label{Thm:Cobounded}
There is a function $\fancy \from \NN \to \NN$, depending only on
$\quasi$ and the topology of $S$ and $\Sigma$, with the following
property.  Suppose $(k, \ell)$ is a pair of $\cobound$--cobounded
laminations and $\phi \from \calC(S) \to \calC(\Sigma)$ is a
$\quasi$--quasi-isometric embedding.  Then $\kappa = \phi(k)$ and
$\lambda = \phi(\ell)$ are $\fancy(\cobound)$--cobounded
\end{theorem}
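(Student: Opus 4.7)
The plan is to argue by contradiction using the shell lemma (\reflem{FinitelyMany}) together with the Bounded Geodesic Image Theorem (\refthm{Image}). Assume some strict subsurface $W\subset\Sigma$ is cut by both $\kappa$ and $\lambda$, with $d_W(\kappa,\lambda)$ extremely large. Applied to a tight geodesic $[\kappa,\lambda]\subset\overline{\calC(\Sigma)}$, \refthm{Image} forces some vertex $\zeta$ on this geodesic to miss $W$, so $d_\Sigma(\zeta,\bdy W)\leq 1$.

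Now pick a tight geodesic $[k,\ell]$ in $\overline{\calC(S)}$. The image $\phi([k,\ell])$ is a quasi-geodesic and, by \reflem{Stability}, lies Hausdorff close to $[\kappa,\lambda]$ in $\calC(\Sigma)$. Choose a vertex $z\in[k,\ell]$ with $\phi(z)$ close to $\zeta$, hence close to $\bdy W$. Fix a radius $\radius$ (to be chosen below), take $[a,b]\subset[k,\ell]$ the subgeodesic of length $2\radius$ centered at $z$, and apply \reflem{Tight} twice to see that $(a,b)$ is $(\cobound+2\tight)$--cobounded. By \reflem{FinitelyMany} there is a bypass path $P$ in $\calC(S)$, of uniformly bounded length, joining $a$ to $b$ outside of $\calB(z,\radius-1)$.

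Push $P$ forward. Every vertex $\phi(p)$ of $\phi(P)$ satisfies $d_\Sigma(\phi(p),\phi(z))\geq \radius/\quasi-\quasi$. Choosing $\radius$ large (depending only on $\quasi$ and the stability and hyperbolicity constants of $\calC(\Sigma)$) makes this exceed the diameter of the one-neighborhood of $\bdy W$ in $\calC(\Sigma)$, so every vertex of $\phi(P)$ cuts $W$. The Lipschitz Projection Lemma (\reflem{LipschitzProjection}) then bounds $d_W(\phi(a),\phi(b))$ by a constant. The same lower bound on distance from $\phi(z)$ applies to every $\phi(v)$ for $v\in[k,a]$ and $v\in[b,\ell]$; stability transfers this to every vertex of the geodesics $[\kappa,\phi(a)]$ and $[\phi(b),\lambda]$, so \refthm{Image} gives $d_W(\kappa,\phi(a))<\image$ and $d_W(\phi(b),\lambda)<\image$. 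Adding up by the triangle inequality for $\pi_W$ bounds $d_W(\kappa,\lambda)$ by a constant $\fancy(\cobound)$ depending only on $\cobound$, $\quasi$, and the topology of $S$ and $\Sigma$, contradicting the hypothesis.

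The main obstacle is the uniformity of constants. The radius $\radius$ must be chosen without knowing the would-be witness $W$ (only from $\quasi$ and the stability and hyperbolicity constants of $\calC(\Sigma)$); the length of $P$ provided by \reflem{FinitelyMany} then depends on $\radius$ and on the coboundedness constant inherited from \reflem{Tight}, but not on $W$ or on $d_W(\kappa,\lambda)$. Verifying this uniformity is what makes the final bound a genuine function of $\cobound$ alone.
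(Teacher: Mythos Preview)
Your argument is essentially identical to the paper's: locate $\bdy W$ near $[\kappa,\lambda]$ via \refthm{Image}, pull back to a center $z\in[k,\ell]$ using stability, bypass $z$ with \reflem{FinitelyMany} (invoking \reflem{Tight} for the coboundedness of $(a,b)$, which the paper leaves implicit), and bound the three pieces $d_W(\kappa,\phi(a))$, $d_W(\phi(a),\phi(b))$, $d_W(\phi(b),\lambda)$ via \refthm{Image} and \reflem{LipschitzProjection}. One detail to tighten: $\phi(P)$ is a sequence of vertices at mutual distance up to $2\quasi$, not a path in $\calC(\Sigma)$, so before invoking \reflem{LipschitzProjection} you must interpolate by geodesic segments and choose $\radius$ with enough extra margin (as the paper does) so that the interpolated vertices are still at distance $\geq 2$ from $\bdy W$ and hence cut $W$.
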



\begin{proof}
For every strict subsurface $\Omega \subset \Sigma$ we must bound
$d_\Omega(\kappa, \lambda)$ from above.  Now, if $d_\Sigma(\bdy
\Omega, [\kappa, \lambda]) \geq 2$ then by the Bounded Geodesic Image
Theorem (\ref{Thm:Image}) we find $d_\Omega(\kappa,\lambda) \leq
\cobound_0 = \cobound_0(\Sigma)$ and we are done.

\begin{figure}[htbp]
\labellist
\small\hair 2pt
\pinlabel {$\calC(S)$} [Bl] at 144.5 149.6 
\pinlabel {$\calC(\Sigma)$} [Bl] at 349.8 149.6 
\pinlabel {$k$} [Br] at 35.1 154.6 
\pinlabel {$\kappa$} [Br] at 237.4 153.6 
\pinlabel {$\ell$} [t] at 85.8 0 
\pinlabel {$\lambda$} [t] at 308.7 1
\pinlabel {$a$} [Br] at 69.8 94.4
\pinlabel {$\alpha$} [r] at 264.5 109.4
\pinlabel {$b$} [tr] at 84.8 39.6 
\pinlabel {$\beta$} [r] at 292.6 43.7
\pinlabel {$z$} [r] at 78.8 68.8
\pinlabel {$\phi(z)$} [t] at 255 62.2
\pinlabel {$\bdy \Omega$} [l] at 296.6 81.3
\endlabellist
\[
\begin{array}{c}
\includegraphics[scale=.8]{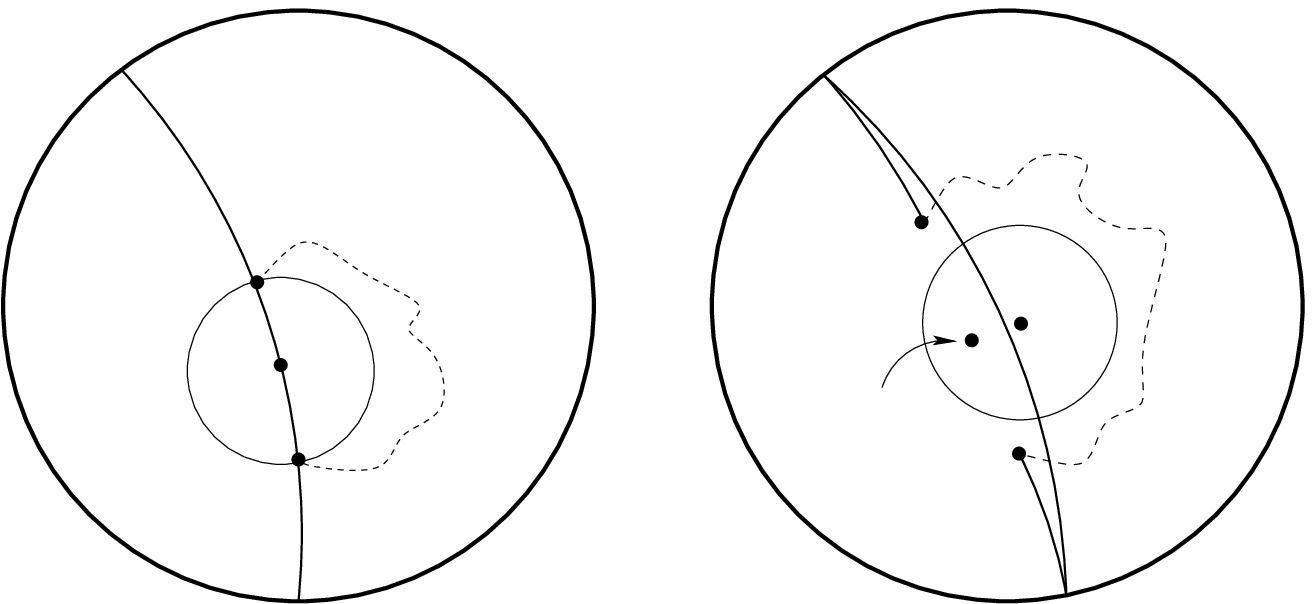}
\end{array}
\]
\caption{Points outside of an $\radius$--ball about $z$ are sent by
  $\phi$ outside of an $(\quasi + \stab_\Sigma + 2)$--ball about $\bdy
  \Omega$.}
\label{Fig:Cobounded}
\end{figure}


Now suppose $d_\Sigma(\bdy \Omega, [\kappa, \lambda]) \leq 1$.  Note
that $[\kappa, \lambda]$ lies in the $\stab$--neighborhood of
$\phi([k, \ell])$, where $\stab = \stab_\Sigma$ is provided by
\reflem{Stability}.  Choose a vertex $z \in [k, \ell]$ so that
$d_\Sigma(\phi(z), \bdy \Omega) \leq \stab + 1$.  Set $\radius =
\quasi(\quasi + 2\stab + 3) + \quasi$.  Thus
\begin{align*}
d_S(y, z)  \geq \radius 
    & \implies d_\Sigma(\phi(y), \phi(z)) \geq \quasi + 2\stab + 3 \\
    & \implies d_\Sigma(\phi(y), \bdy \Omega) \geq \quasi + \stab + 2.
\end{align*}
Let $a$ and $b$ be the intersections of $[k, \ell]$ with $\bdy
\calB(z, \radius)$, chosen so that $[k, a]$ and $[b, \ell]$ meet
$\calB(z, \radius)$ at the vertices $a$ and $b$ only.  Connect $a$ to
$b$ via a path $P$ of length $\error$, outside of $\calB(z, \radius -
1)$, as provided by \reflem{FinitelyMany}.

Let $\alpha = \phi(a)$ and $\beta = \phi(b)$.  Now, any consecutive
vertices of $P$ are mapped by $\phi$ to vertices of $\calC(\Sigma)$
that are at distance at most $2\quasi$.  Connecting these by geodesic
segments gives a path $\Pi$ from $\alpha$ to $\beta$.

Note that $\Pi$ has length at most $2\quasi \error$.  Since every
vertex of $\phi(P)$ is $(\quasi + \stab + 2)$--far from $\bdy \Omega$
every vertex of $\Pi$ is $(\stab + 2)$--far from $\bdy \Omega$.  So
every vertex of $\Pi$ cuts $\Omega$.  It follows that
$d_\Omega(\alpha, \beta) \leq 4\quasi \error$, by
\reflem{LipschitzProjection}.

All that remains is to bound $d_\Omega(\kappa, \alpha)$ and
$d_\Omega(\beta, \lambda)$.  It suffices, by the Bounded Geodesic
Image Theorem, to show that every vertex of $[\kappa, \alpha]$ cuts
$\Omega$.  The same will hold for $[\beta, \lambda]$.

Every vertex of $[\kappa, \alpha]$ is $\stab$--close to a vertex of
$\phi([k, a])$.  But each of these is $(\quasi + \stab + 2)$--far from
$\bdy \Omega$.  This completes the proof.
\end{proof}

\section{The induced map on markings}
In this section, given a quasi-isometric embedding of one curve
complex into another we construct a coarsely Lipschitz map between the
associated marking complexes.

Let $\calM(S)$ and $\calM(\Sigma)$ be the marking complexes of $S$ and
$\Sigma$ respectively. Let $p \from \calM(S) \to \calC(S)$ and $\pi
\from \calM(\Sigma) \to \calC(\Sigma)$ be maps that send a marking to
some curve in that marking.

\begin{theorem}
\label{Thm:LipschitzExtension}
Suppose that $\xi(S) \geq 2$ and $\phi \from \calC(S) \to
\calC(\Sigma)$ is a $\quasi$--quasi-isometric embedding.  Then $\phi$
induces a coarse Lipschitz map $\Phi \from \calM(S) \to \calM(\Sigma)$
so that the diagram
\[
\begin{CD} 
\calM(S) @>\Phi>> \calM(\Sigma)\\ 
 @VVpV               @VV\pi V  \\ 
\calC(S) @>\phi>> \calC(\Sigma) 
\end{CD} 
\]
commutes up to an additive error. Furthermore, if $\phi$ is a
quasi-isometry then so is $\Phi$.
\end{theorem}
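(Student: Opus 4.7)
The strategy is to construct $\Phi$ by combining \refprop{Extension} and \reflem{Completion}, using \refthm{Cobounded} as the main vehicle for transferring structure from $\calC(S)$ to $\calC(\Sigma)$. Commutativity of the diagram will be built directly into the definition, and the coarse Lipschitz bound will reduce to showing that the two natural choices in the construction of $\Phi(m)$ always yield nearby markings in $\calM(\Sigma)$.

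For the construction, given $m \in \calM(S)$ I apply \refprop{Extension} to select laminations $k_m, \ell_m$ with the three pairs $(k_m, \ell_m)$, $(k_m, m)$, $(m, \ell_m)$ all $\extension$--cobounded, and with $[k_m, \ell_m]$ passing within one of $p(m)$. Set $\kappa_m = \phi(k_m)$ and $\lambda_m = \phi(\ell_m)$. By \refthm{Cobounded} the pair $(\kappa_m, \lambda_m)$ is $\fancy(\extension)$--cobounded in $\calC(\Sigma)$, and by \reflem{Stability} the image $\phi([k_m, \ell_m])$ lies within $\stab_\Sigma$ of the tight geodesic $[\kappa_m, \lambda_m]$, so some vertex $\beta_m \in [\kappa_m, \lambda_m]$ sits at uniformly bounded distance from $\phi(p(m))$. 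Since $(\beta_m, \lambda_m)$ is cobounded by \reflem{Tight}, an application of \reflem{Completion} produces a marking $\Phi(m)$ with $\beta_m \in \base(\Phi(m))$ and $(\Phi(m), \lambda_m)$ cobounded. The diagram then commutes up to a uniform additive error.

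The hard part is the Lipschitz property, which I tackle via a well-definedness lemma: if $(k_1, \ell_1)$ and $(k_2, \ell_2)$ are two Extension pairs for the same $m$, then the resulting completions are uniformly close in $\calM(\Sigma)$. Every strict subsurface of $S$ is cut by $m$ (its base is a pants decomposition) and by every ending lamination, so the triangle inequality applied to $d_Z(\ell_i, m) \leq \extension$ gives that $(\ell_1, \ell_2)$ is $(2\extension)$--cobounded in $\calC(S)$; applying \refthm{Cobounded} to this pair transfers coboundedness to $(\phi(\ell_1), \phi(\ell_2))$ in $\calC(\Sigma)$, and a second triangle inequality on subsurface projections combined with \reflem{ElementaryII} completes the lemma. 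For the Lipschitz step proper, given $m$ and $m'$ with $d_\calM(m, m') \leq 1$, \reflem{ElementaryI} gives $d_Z(m, m') \leq 4$ for every $Z$, so $(k_m, \ell_m)$ remains an Extension pair for $m'$ with constants degraded by a bounded amount. Let $\Phi^\ast(m')$ be the completion built from $m'$ using this shared pair. Then $\Phi(m)$ and $\Phi^\ast(m')$ are cobounded with the common lamination $\lambda_m$ and have nearby base curves on the same tight geodesic $[\kappa_m, \lambda_m]$, while $\Phi^\ast(m')$ and $\Phi(m')$ are close by the well-definedness lemma applied to $m'$; the triangle inequality in $\calM(\Sigma)$ finishes the step.

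For the quasi-isometry statement, I will pick a quasi-inverse $\psi$ of $\phi$, construct $\Psi \colon \calM(\Sigma) \to \calM(S)$ by the same procedure, and verify via commutativity of the two diagrams (plus the well-definedness lemma applied in each direction) that $\Phi \circ \Psi$ and $\Psi \circ \Phi$ are uniformly close to the identities. The crux of the whole argument is the well-definedness lemma and its appeal to \refthm{Cobounded}: the hypothesis $\xi(S) \geq 2$ enters there, via \refthm{Gabai}, precisely to guarantee that coboundedness can be pushed through $\phi$, which is what controls the subsurface projection coordinates of different valid completions.
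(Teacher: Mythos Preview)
Your proposal is correct and follows essentially the same approach as the paper: construct $\Phi(m)$ from an admissible triple $(m,k,\ell)$ via \refprop{Extension}, push $k,\ell$ through $\phi$, use \refthm{Cobounded} and \reflem{Stability} to land on a cobounded tight geodesic in $\calC(\Sigma)$, then apply \reflem{Tight} and \reflem{Completion} to produce the target marking; the Lipschitz bound comes from \reflem{ElementaryII} once all subsurface projections are controlled, with \refthm{Cobounded} doing the real work on the strict subsurfaces.

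The only organizational difference is that you factor the argument into a separate well-definedness lemma (same $m$, two extension pairs) followed by a shared-pair step for adjacent $m,m'$, whereas the paper handles both at once by allowing $m$ and $m'$ to differ by at most one elementary move and comparing the two output triples directly via $d_\Omega(\mu,\kappa)+d_\Omega(\kappa,\kappa')+d_\Omega(\kappa',\mu')$, bounding the middle term through $d_Y(k,k')\leq d_Y(k,m)+d_Y(m,m')+d_Y(m',k')\leq 2\cobound+4$ and \refthm{Cobounded}. Your route is slightly longer but perhaps more modular; the paper's is more compact. The content is the same.
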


\begin{proof}
For a marking $m$ and laminations $k$ and $\ell$, we say the triple
$(m, k, \ell)$ is $\cobound$--admissible if
\begin{itemize}
\item $d_S \big(m, [k, \ell] \big) \leq 3$ and
\item the pairs $(k, m)$, $(m, \ell)$ and $(k,\ell)$ are $\cobound$--cobounded.
\end{itemize}
For $\cobound$ large enough and for every marking $m$,
\refprop{Extension} shows that there exists a $\cobound$-admissible
triple $(m, k, \ell)$.

Given a $\cobound$-admissible triple $(m, k, \ell)$ we will now
construct a triple $(\mu, \kappa, \lambda)$ for $\Sigma$. Let $\alpha$
be any curve in $\phi(m) \subset \calC(\Sigma)$, $\kappa = \phi(k)$
and $\lambda = \phi(\ell)$.  Note that
\begin{equation}
d_\Sigma \big( \alpha, [\kappa, \lambda] \big) \leq 4 \quasi + \stab_\Sigma,
\end{equation}
by the stability of quasi-geodesics (\reflem{Stability}).  Also
$(\kappa, \lambda)$ is a $\fancy(\cobound)$--cobounded pair, by
\refthm{Cobounded}. Let $\beta$ be a closest point projection of
$\alpha$ to the geodesic $[\kappa, \lambda]$.  By \reflem{Tight}, the
pair $(\beta, \kappa)$ is $(\fancy(\cobound) + \tight)$--cobounded.
Using \reflem{Completion}, there is a marking $\mu$ so that $\beta \in
\base(\mu)$ and $(\mu, \kappa)$ are $(\fancy(\cobound)+\tight +
\completion)$--cobounded.  Therefore, for $\Cobound =
2\fancy(\cobound) + \tight + \completion$ the triple $(\mu, \kappa,
\lambda)$ is $\Cobound$--admissible. Define $\Phi(m)$ to be equal to
$\mu$.

\begin{figure}[htbp]
\labellist
\small\hair 2pt
\pinlabel {$\calC(\Sigma)$} [Bl] at 139.3 163.4
\pinlabel {$\lambda$} [Br] at 13.3 134.8
\pinlabel {$\lambda'$} [tr] at 15.3 39.4
\pinlabel {$\kappa$} [Bl] at 173.1 118.2
\pinlabel {$\kappa'$} [tl] at 168.4 50.9
\pinlabel {$\alpha$} [l] at 96.9 115.9 
\pinlabel {$\alpha'$} [l] at 79.8 50.4 
\pinlabel {$\mu$} [bl] at 96.6 92.6
\pinlabel {$\mu'$} [tl] at 79.8 75.3
\endlabellist
\[
\begin{array}{c}
\includegraphics[scale=1]{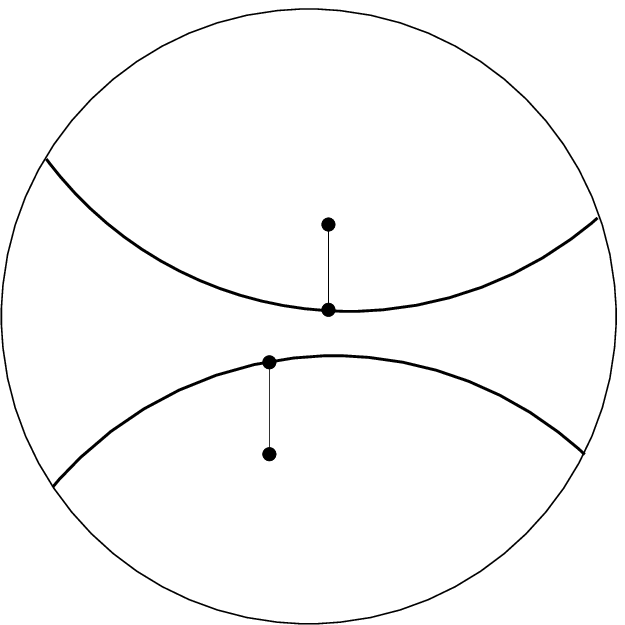}
\end{array}
\]
\caption{Markings $\mu$ and $\mu'$ are bounded apart.}
\label{Fig:Image}
\end{figure}

We now prove $\Phi$ is coarsely well-defined and coarsely Lipschitz.
Suppose that $m$ and $m'$ differ by at most one elementary move and
the triples $(m, k, \ell)$ and $(m', k', \ell')$ are
$\cobound$-admissible. Let $(\mu, \kappa, \lambda)$ and $(\mu',
\kappa', \lambda')$ be any corresponding $\Cobound$--admissible
triples in $\Sigma$, as constructed above. (See \reffig{Image}.)  We
must show that there is a uniform bound on the distance between $\mu$
and $\mu'$ in the marking graph.  By \reflem{ElementaryII}, it
suffices to prove:

\begin{claim*}
For every subsurface $\Omega \subseteq \Sigma$, $d_\Omega(\mu, \mu') =
O(1)$.
\end{claim*}

\noindent
Now, \reflem{ElementaryI} gives $d_S(m, m') \leq
4$. Deduce
$$
d_\Sigma(\phi(m), \phi(m')) \leq 5\quasi .
$$ 
Therefore,
\begin{align*}
d_\Sigma(\mu, \mu') 
 & \leq d_\Sigma \big( \mu, \phi(m) \big) 
   + d_\Sigma \big(\phi(m), \phi(m') \big) 
   + d_\Sigma \big(\phi(m'), \mu') \big) \\
 & \leq  2 (7 \quasi + \stab_\Sigma + 2) + 5 \quasi.
\end{align*}
On the other hand, for any strict subsurface $\Omega \subset \Sigma$, we have
$$
d_\Omega(\mu, \mu') \leq d_\Omega(\mu, \kappa) + 
   d_\Omega(\kappa, \kappa') + d_\Omega(\kappa', \mu'). 
$$
The first and third terms on the right are bounded by $\Cobound$.
By \refthm{Cobounded}, the second term is bounded by 
$\fancy(2 \cobound+4)$. This is because, for every strict subsurface 
$Y \subset S$, 
$$
d_Y(k, k') \leq d_Y(k, m) + d_Y(m, m') + d_Y(m', k') \leq 2\cobound + 4.
$$ 
This proves the claim; thus $\Phi$ is coarsely well-defined and
coarsely Lipschitz.

Now assume that $\phi$ is a quasi-isometry with inverse $f \from
\calC(\Sigma) \to \calC(S)$.  Let $\Phi$ and $F$ be the associated
maps between marking complexes.  We must show that $F \circ \Phi$ is
close to the identity map on $\calC(S)$.  Fix $m \in \calM(S)$ and
define $\mu = \Phi(m)$, $m' = F(\mu)$.  For any admissible triple
$(m, k, \ell)$ we have, as above, an admissible triple $(\mu, \kappa,
\ell)$.  Since $f(\kappa) = k$ and $f(\lambda) = \ell$ it follows that
$(m', k, \ell)$ is also admissible for a somewhat larger constant.

As above, by \reflem{ElementaryII} it suffices to show that $d_Z(m,
m') = O(1)$ for every essential subsurface $Z \subset S$.  Since $f
\circ \phi$ is close to the identity, commutivity up to additive error
implies that $d_S(m, m')$ is bounded.  Since $(m, k)$ and $(m', k)$
are cobounded, the triangle inequality implies that $d_Z(m, m') =
O(1)$ for strict subsurfaces $Z \subset S$.  This completes the proof.
\end{proof}

\section{Rigidity of the curve complex}

\begin{theorem}
\label{Thm:Rigid}
Suppose that $\xi(S) \geq 2$.  Then every quasi-isometry of $\calC(S)$
is bounded distance from a simplicial automorphism of $\calC(S)$.
\end{theorem}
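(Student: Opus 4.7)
The plan is to combine \refthm{LipschitzExtension} with \refthm{MCGRigid} of Behrstock--Kleiner--Minsky--Mosher to promote a quasi-isometry of $\calC(S)$ to a simplicial automorphism, treating the exceptional surface $S_{1,2}$ via the simplicial identification $\calC(S_{1,2}) = \calC(S_{0,5})$ noted in the outline.

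First assume $S \neq S_{1,2}$. Given a $\quasi$--quasi-isometry $\phi \from \calC(S) \to \calC(S)$, I would apply \refthm{LipschitzExtension} to obtain a quasi-isometry $\Phi \from \calM(S) \to \calM(S)$ for which the square with projections $p \from \calM(S) \to \calC(S)$ commutes up to an additive error. By \refthm{MCGRigid} there is a homeomorphism $h$ of $S$ whose action on $\calM(S)$ is uniformly close to $\Phi$; let $h_* \in \Aut(\calC(S))$ denote the simplicial automorphism induced by $h$.

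To conclude, I would verify that $h_*$ is bounded distance from $\phi$. Given $a \in \calC(S)$, choose any marking $m$ with $a \in \base(m)$, so $p(m) = a$; note that $p(h \cdot m) = h_*(a)$ up to an error of at most $2$, since $h_* \cdot \base(m) \subseteq \base(h \cdot m)$ and any marking has $\calC(S)$-diameter at most $2$. The triangle inequality
\[
d_{\calC(S)}(\phi(a), h_*(a)) \leq
d_{\calC(S)}(\phi \circ p(m), p \circ \Phi(m))
+ d_{\calC(S)}(p \circ \Phi(m), p(h \cdot m))
+ d_{\calC(S)}(p(h \cdot m), h_*(a))
\]
then bounds the left side by a sum of three uniformly controlled terms: the first by the additive commutativity from \refthm{LipschitzExtension}, the second by closeness of $\Phi$ and the action of $h$ in $\calM(S)$ postcomposed with the coarsely Lipschitz map $p$, and the third by the $\MCG(S)$--equivariance estimate just described. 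Since $a$ was arbitrary, $\phi$ and $h_*$ are at uniformly bounded distance.

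For $S = S_{1,2}$ the complexes $\calC(S_{1,2})$ and $\calC(S_{0,5})$ agree as simplicial complexes (the hyperelliptic involution on $S_{1,2}$ induces a canonical bijection between isotopy classes of essential simple closed curves on the two surfaces preserving the disjointness relation). Applying the previous argument with $S_{0,5}$ in place of $S$ yields a simplicial automorphism of this common complex bounded distance from $\phi$, which proves \refthm{Rigid} for $S_{1,2}$. The main technical point of the plan is the descent paragraph: although each ingredient is in hand, one must carefully assemble the three additive errors into a single uniform bound, and one must check that $p$ is sufficiently equivariant for the third term to be controlled.
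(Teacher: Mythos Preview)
Your proposal is correct and follows essentially the same route as the paper: lift $\phi$ to $\Phi$ on $\calM(S)$ via \refthm{LipschitzExtension}, invoke \refthm{MCGRigid} to obtain a homeomorphism, and then descend using the coarse commutativity of the square and the Lipschitz property of $p$; the exceptional case $S_{1,2}$ is handled identically via the identification $\calC(S_{1,2}) = \calC(S_{0,5})$. One small quibble: since $p$ is a fixed map sending a marking to \emph{some} base curve, the assertion ``$p(m) = a$'' need not hold literally; the paper instead uses $d_S(a, p(m)) \leq 2$ and absorbs the resulting term $d_S(\phi(a), \phi(p(m))) \leq 3\quasi$ into the triangle inequality, but this does not affect your argument.
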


\begin{proof}
Let $f \from \calC(S) \to \calC(S)$ be a $\quasi$-quasi-isometry.  By
\refthm{LipschitzExtension} there is a $\Quasi$-quasi-isometry $F
\from \calM(S) \to \calM(S)$ associated to $f$.  By \refthm{MCGRigid}
the action of $F$ is uniformly close to the induced action of some
homeomorphism $G \from S \to S$.  That is,
\begin{equation} 
\label{Eqn:FG}
d_\calM \big(F(m), G(m) \big) =O(1).
\end{equation}
Let $g \from \calC(S) \to \calC(S)$ be the simplicial automorphism induced 
by $G$.  We need to show that $f$ and $g$ are equal in $\QI(\calC(S))$.
Fix a curve $a \in \calC(S)$. We must show the distance $d_S(f(a), g(a))$ 
is bounded by a constant independent of the curve $a$.
Choose a marking $m$ containing $a$ as a base curve. Note
that $d_S(a, p(m)) \leq 2$, thus
$$
d_S \big( f(a), f(p(m)) \big) \leq 3 \quasi.
$$
By \refthm{LipschitzExtension}, for every marking $m \in \calM(S)$,
\begin{align*}
\label{Eqn:Additive}
d_S \big( f(p(m)), p(F(m)) \big) = O(1).
\end{align*} 
From \refeqn{FG} and \reflem{ElementaryI} we have
$$
d_S \big( p(F(m)), p(G(m)) \big) = O(1).
$$
Also, $g(a)$ is a base curve of $G(m)$, hence
$$
d_S\big( p\big( G(m) \big), g(a) \big) \leq 2. 
$$
These four equations imply that
$$
d_S \big( f(a), g(a) \big) = O(1).
$$
This finishes the proof.
\end{proof}

\appendix
\section{Classifying mapping class groups}
\label{App:Ivanov}


For any compact, connected, orientable surface $S$ let $\MCG(S)$ be
the extended mapping class group: the group of homeomorphisms of $S$,
considered up to isotopy.  We will use Ivanov's characterization of
Dehn twists via {\em algebraic twist subgroups}~\cite{Ivanov88}, the
action of $\MCG(S)$ on $\PML(S)$, and the concept of a {\em
bracelet}~\cite{BehrstockMargalit06} to give a detailed proof of:

\begin{theorem}
\label{Thm:Ivanov}
Suppose that $S$ and $\Sigma$ are compact, connected, orientable
surfaces with $\MCG(S)$ isomorphic to $\MCG(\Sigma)$.  Then either
\begin{itemize}
\item
$S$ and $\Sigma$ are homeomorphic, 
\item
$\{ S, \Sigma \} = \{ S_1, S_{1,1} \}$, or
\item
$\{ S, \Sigma \} = \{ \Sph, \DD \}$.
\end{itemize}
\end{theorem}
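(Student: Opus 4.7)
The plan is to reduce Theorem \ref{Thm:Ivanov} to a combinatorial question about curve complexes via Ivanov's algebraic characterization of Dehn twists, and then to separate the residual coincidences using finer algebraic invariants (the centre of $\MCG$, the bracelet subgroups, and the action on $\PML$).

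First, I would dispose of the low-complexity cases by direct case analysis. For $\xi(S) \leq -1$ the surfaces $\Sph, \DD, S_{0,2}$ have finite mapping class groups of small order, and the only coincidence is $\MCG(\Sph) \isom \MCG(\DD) \isom \ZZ/2\ZZ$.  For $\xi(S) \in \{0,1\}$ the remaining small surfaces are $S_{0,3}, S_1, S_{0,4}, S_{1,1}$: the pair of pants has a finite group, while $\MCG(S_1)$ and $\MCG(S_{1,1})$ are both isomorphic to $\GL(2,\ZZ)$ (one sees this by capping off the boundary circle of $S_{1,1}$). The surface $S_{0,4}$ is distinguished from the latter by the presence of a Klein four normal subgroup (permuting pairs of punctures), which $\GL(2,\ZZ)$ lacks. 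This yields the exceptions $\{\Sph, \DD\}$ and $\{S_1, S_{1,1}\}$ and no further coincidences at low complexity.

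Second, I would assume $\xi(S), \xi(\Sigma) \geq 2$ and invoke Ivanov's algebraic characterization of Dehn twists \cite{Ivanov88}: a mapping class is (a power of) a Dehn twist if and only if it satisfies certain centralizer conditions involving maximum-rank abelian subgroups of $\MCG$, and these conditions are purely group-theoretic. Hence any isomorphism $\psi \colon \MCG(S) \to \MCG(\Sigma)$ sends Dehn twists to Dehn twists. Since disjointness of simple closed curves corresponds to commutativity of their Dehn twists, $\psi$ induces a simplicial isomorphism $\psi_* \colon \calC(S) \to \calC(\Sigma)$.  From $\psi_*$ one reads off $\xi(S) = \xi(\Sigma)$ from the dimension of the complex, and an inductive analysis of vertex links -- which are joins of curve complexes of complementary subsurfaces -- recovers the topological types of essential curves together with the homeomorphism types of the pieces they cut off.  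For most pairs this already forces $S \isom \Sigma$.

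The main obstacle is the residual pairs where $\calC(S) \isom \calC(\Sigma)$ but $S \nhomeo \Sigma$: namely $\{S_{0,6}, S_2\}$ at $\xi = 3$ and $\{S_{0,5}, S_{1,2}\}$ at $\xi = 2$. Here $\psi_*$ alone does not distinguish the two sides, and one must use more of the group structure. For each coincidental pair the genus side has a non-trivial centre in $\MCG$ generated by the hyperelliptic involution, while the sphere side has trivial centre; this asymmetry alone rules out any isomorphism. The bracelet invariant of Behrstock--Margalit \cite{BehrstockMargalit06} systematizes this kind of argument by producing maximum-rank abelian subgroups of $\MCG$ associated to multicurves whose complementary pieces are three-holed spheres, together with combinatorial data (such as adjacency patterns of components) that is algebraically detectable from $\MCG$ and distinguishes the two sides of each coincidental pair.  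Klarreich's theorem (\refthm{Klarreich}) and the induced action on $\PML$ provide complementary geometric information whenever the purely algebraic invariants threaten to fail. Combining these ingredients forces the trichotomy in the statement.
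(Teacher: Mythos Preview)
Your overall strategy---reduce to a curve-complex isomorphism via an algebraic twist characterization, then separate the residual pairs by centre---is a legitimate alternative to the paper's route, and is close in spirit to Ivanov--McCarthy~\cite{IvanovMcCarthy99}.  The paper does something more direct: it never builds a curve-complex isomorphism at all.  Instead it shows that the algebraic rank recovers $\xi = 3\g - 3 + \boundary$, and then uses \emph{algebraic twist subgroups} together with the \emph{bracelet number} (the maximal number of pairwise-commuting conjugates that braid with a given generator) to recover $2\g - 2 + \boundary$.  These two numbers determine $\g$ and $\boundary$.  The centre is used only to peel off $S_{1,2}$ and $S_2$ (where the twist characterization needs trivial centre as a hypothesis), and the one genuinely delicate residual case is $S_{0,6}$ versus $S_{1,3}$, distinguished by bracelet number ($2$ versus $3$).

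There is a real gap in your reduction step.  The algebraic characterization you cite from~\cite{Ivanov88} (and the version the paper proves as \refthm{CharTwist}) does \emph{not} pin down arbitrary Dehn twists: generators of an algebraic twist subgroup are all conjugate, so one only sees twists on nonseparating curves (when $\g \geq 1$) or half-twists on pants curves (when $\g = 0$, $\xi \leq 3$).  Separating curves that are not pants curves never appear.  So you do not immediately get a simplicial isomorphism $\psi_* \from \calC(S) \to \calC(\Sigma)$; at best you get a bijection on one topological type of curve, and extending this to all of $\calC(S)$ is essentially the content of~\cite{IvanovMcCarthy99}, which already proves the theorem for $\g \geq 1$.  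Your description of the bracelet invariant is also off: it is not a family of abelian subgroups attached to multicurves, but a count of commuting braid-partners of a single twist, and it is used to read off $2\g - 2 + \boundary$ directly rather than to separate residual pairs.  Finally, the appeal to Klarreich's theorem and the action on $\PML$ is not needed here; the paper uses the $\PML$ action only inside the proof of \refthm{CharTwist}, to show that elements of $C(H)$ preserve each curve of the pants decomposition.
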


By the classification of surfaces, $S$ is determined up to
homeomorphism by the two numbers $\g = \genus(S)$ and $\boundary =
|\bdy S|$.  So to prove \refthm{Ivanov} it suffices prove that $\xi(S)
= 3\g -3 + \boundary$, the complexity of $S$, and $\g$, the genus, are
{\em algebraic}: determined by the isomorphism type of $\MCG(S)$.

\begin{remark}
\label{Rem:Ivanov}
\refthm{Ivanov} is a well-known folk-theorem.  A version of
\refthm{Ivanov}, for pure mapping class groups, is implicitly
contained in~\cite{Ivanov88} and was known to N.~Ivanov as early as
the fall of 1983~\cite{Ivanov07}.  Additionally, Ivanov and McCarthy
\cite{IvanovMcCarthy99} prove \refthm{Ivanov} when $\g \geq 1$ (see
also \cite{IvanovMcCarthy95}).

There is also a ``folk proof'' of \refthm{Ivanov} relying on the fact
that the rank and virtual cohomological dimension are algebraic and
give two independent linear equations in the unknowns $\g$ and
$\boundary$.  However, the formula for the vcd changes when $\g = 0$
and when $\boundary = 0$.  Thus there are two infinite families of
pairs of surfaces which are not distinguished by these invariants.

These difficult pairs can be differentiated by carefully considering
torsion elements in the associated mapping class groups.  We prefer
the somewhat lighter proof of \refthm{Ivanov} given here.
\end{remark}

The {\em rank} of a group is the size of a minimal generating set. The
{\em algebraic rank} of a group $G$, $\rank(G)$, is the maximum of the
ranks of free abelian subgroups $H < G$.  Now, the algebraic rank of
$\MCG(S)$ is equal to $\xi(S)$, when $\xi(S) \geq 1$
(Birman-Lubotzky-McCarthy~\cite{BirmanEtAl83}).  When $\xi(S) \leq 0$
the algebraic rank is zero or one.

So when $\xi(S) \geq 1$ the complexity is algebraically determined.
There are only finitely many surfaces having $\xi(S) < 1$; we now
dispose of these and a few other special cases.

\subsection*{Low complexity}

A Dehn twist along an essential, non-peripheral curve in an orientable
surface has infinite order in the mapping class group.  
Thus, the only surfaces where the mapping class group has algebraic
rank zero are the sphere, disk, annulus, and pants.  We may compute
these and other low complexity mapping class groups using the {\em
Alexander method}~\cite{FarbMargalit10}.

For the sphere and the disk we find
$$
\MCG (\Sph),~\MCG(\DD) \isom \ZZ_2,
$$ where $\ZZ_2$ is the group of order two generated by a reflection.

For the annulus and pants we find
$$
\MCG(\Ann) \isom K_4 
\quad\text{and}\quad \MCG(S_{0,3}) \isom 
\ZZ_2 \times \Sigma_3,
$$ 
where $K_4$ is the Klein $4$--group and $\Sigma_3$ is the symmetric
group acting on the boundary of $S_{0,3}$.  Here, in addition to
reflections, there is the permutation action of $\MCG(S)$ on $\bdy S$.


The surfaces with algebraic rank equal to one are the torus,
once-holed torus, and four-holed sphere.  For the torus and the
once-holed torus we find
$$
\MCG (\TT),~\MCG(S_{1,1}) \isom \GL(2, \ZZ).
$$ 
The first isomorphism is classical~\cite[Section 6.4]{Stillwell93}.
The second has a similar proof: the pair of curves meeting once is
replaced by a pair of disjoint arcs cutting $S_{1,1}$ into a disk.


Now we compute the mapping class group of the four-holed sphere:
$$
\MCG (S_{0,4})  \isom  K_4 \rtimes \PGL(2, \ZZ).
$$ 
The isomorphism arises from the surjective action of $\MCG(S_{0,4})$
on the Farey graph.  If $\phi$ lies in the kernel then $\phi$ fixes
each of the slopes $\{ 0, 1, \infty\}$ setwise.  Examining the induced
action of $\phi$ on these slopes and their intersections shows that
$\phi$ is either the identity or one of the three ``fake''
hyperelliptic involutions.  It follows that $\MCG(S_{0,4})$ has no
center,
and so distinguishes $S_{0,4}$ from $S_1$ and $S_{1,1}$.


All other compact, connected, orientable surfaces have algebraic rank
equal to their complexity and greater than one (again,
see~\cite{BirmanEtAl83}).  Among these $S_{1,2}$ and $S_2$ are the
only ones with mapping class group having nontrivial
center~\cite{FarbMargalit10}.
As the complexities of $S_{1,2}$ and $S_2$ differ, their mapping class
groups distinguish them from each other and from surfaces with equal
complexity.  This disposes of all surfaces of complexity at most three
{\em except} for telling $S_{0,6}$ apart from $S_{1,3}$.  We defer
this delicate point to the end of the appendix.

\subsection*{Characterizing twists}

Recall that if $a \subset S$ is a essential non-peripheral simple
closed curve then $T_a$ is the {\em Dehn twist} about $a$.  We call
$a$ the {\em support} of the Dehn twist.  If $a$ is a separating curve
and one component of $S \setminus a$ is a pair of pants then $a$ is a
{\em pants curve}.  In this case there is a {\em half-twist},
$T_a^{1/2}$, about $a$.  (When $a$ cuts off a pants on both sides then
the two possible half-twists differ by a fake hyperelliptic.)

We say that two elements $f, g \in \MCG(S)$ {\em braid} if $f$ and $g$
are conjugate and satisfy $fgf = gfg$. From~\cite[Lemma
4.3]{McCarthy86} and~\cite[Theorem 3.15]{IvanovMcCarthy99} we have:

\begin{lemma}
\label{Lem:Twist}
Two powers of twists commute if and only if their supporting curves
are disjoint.  Two twists braid if and only if their supporting curves
meet exactly once. \qed
\end{lemma}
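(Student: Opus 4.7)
The plan is to prove both equivalences by combining the fundamental conjugation identity $g T_a g^{-1} = T_{g(a)}$ with the standard intersection number formula $i(T_a(b), b) = i(a,b)^2$ (valid for $a \neq b$), plus the fact that a nonzero power of a Dehn twist algebraically determines its support curve (for instance, as the unique vertex of $\calC(S)$ fixed by the twist).

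For the first statement, the direction ($\Leftarrow$) is the classical observation that if $i(a,b) = 0$ we may realize $a$ and $b$ by disjoint simple closed curves, represent $T_a$ and $T_b$ by homeomorphisms supported in disjoint annular neighborhoods, and note that homeomorphisms with disjoint support commute; hence all of their powers commute. For ($\Rightarrow$), suppose $T_a^n$ and $T_b^m$ commute with $n, m \neq 0$. Rewriting, $T_b^m T_a^n T_b^{-m} = T_a^n$, and the conjugation identity gives $T_{T_b^m(a)}^n = T_a^n$. Since the support of a nonzero power of a Dehn twist is determined algebraically, we conclude $T_b^m(a) = a$ in $\calC(S)$. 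For $a \neq b$ this forces $i(a,b) = 0$ (as $T_b$ has no periodic orbits on curves intersecting $b$), and the case $a = b$ is trivial.

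For the second statement, the direction ($\Leftarrow$) proceeds as follows: when $i(a,b) = 1$, a regular neighborhood $N$ of $a \cup b$ is a copy of $S_{1,1}$, and both $T_a T_b T_a$ and $T_b T_a T_b$ have support in $N$. The braid relation $T_a T_b T_a = T_b T_a T_b$ inside $\Mod(S_{1,1})$ is classical, verifiable for instance by the Alexander method: both sides send $a$ to $b$ (up to isotopy) and agree on a spanning collection of arcs. For ($\Rightarrow$), assume $T_a$ and $T_b$ braid. Then
\[
T_a T_b T_a T_b^{-1} T_a^{-1} = T_b,
\]
which by the conjugation identity yields $T_{(T_a T_b)(a)} = T_b$, hence $T_a T_b(a) = b$, i.e.\ $T_b(a) = T_a^{-1}(b)$. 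Now take geometric intersection with $b$: since $T_b$ fixes $b$ we get $i(T_b(a), b) = i(a, b)$, while the standard formula gives $i(T_a^{-1}(b), b) = i(a,b)^2$. Therefore $i(a,b) = i(a,b)^2$, so $i(a,b) \in \{0, 1\}$.

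It remains to exclude $i(a,b) = 0$ under the assumption $a \neq b$. If the supports were disjoint then the first part of the lemma gives $T_a T_b = T_b T_a$; combining this with the braid relation yields $T_a^2 T_b = T_a T_b T_a = T_b T_a T_b$ (and similarly on the other side), which algebraically forces $T_a = T_b$ and hence $a = b$, a contradiction. The main obstacle is really the intersection-number computation $i(T_a^{-1}(b), b) = i(a,b)^2$; once this is in hand, everything else is a short manipulation of the twist-conjugation identity and the fact that nontrivial twist powers algebraically recover their supporting curves, which are the inputs borrowed from~\cite{McCarthy86} and~\cite{IvanovMcCarthy99}.
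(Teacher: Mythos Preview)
The paper does not actually prove this lemma; it simply cites \cite[Lemma~4.3]{McCarthy86} and \cite[Theorem~3.15]{IvanovMcCarthy99} and appends a \qed. Your write-up supplies the standard argument behind those citations (essentially the proof one finds in Farb--Margalit's \emph{Primer}), and the substance is correct: the conjugation identity $gT_ag^{-1}=T_{g(a)}$, the intersection formula $i(T_a^{\pm1}(b),b)=i(a,b)^2$, and the short cancellation showing that commuting plus braiding forces $T_a=T_b$ all do exactly what you claim.

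One small correction: your parenthetical justification that the support curve is ``the unique vertex of $\calC(S)$ fixed by the twist'' is false --- $T_a$ fixes every curve disjoint from $a$, not just $a$ itself. The fact you actually need, that $T_c^n=T_d^n$ for $n\neq 0$ implies $c=d$, is still true; it follows for instance from the canonical reduction system of a power of a twist being exactly its support curve, or from the intersection formula applied to a curve meeting both $c$ and $d$. Replace the parenthetical and the argument stands.
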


The proof generalizes to half-twists along pants curves:

\begin{lemma}
\label{Lem:HalfTwist}
Two half-twists commute if and only if the underlying curves are
disjoint.  Two half-twists braid if and only if the underlying curves
meet exactly twice. \qed
\end{lemma}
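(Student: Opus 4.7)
The plan is to follow the template of the proof of Lemma~\ref{Lem:Twist} given by McCarthy and Ivanov--McCarthy, pushing through the additional subtleties introduced by the order--two indeterminacy of a half--twist. Throughout I use the identity $(T_a^{1/2})^2 = T_a$ to reduce to the Dehn twist case whenever possible, and work topologically when that reduction fails.

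For the commutation statement, the reverse direction is immediate from squaring: if $T_a^{1/2}$ and $T_b^{1/2}$ commute, then so do $T_a$ and $T_b$, and Lemma~\ref{Lem:Twist} gives $i(a,b) = 0$. For the forward direction I would argue that any two distinct disjoint pants curves $a, b$ have supporting pants $P_a, P_b$ with disjoint interiors: a pair of pants contains no essential non-peripheral simple closed curve, so $b$ cannot lie in the interior of $P_a$, and neither pants can strictly contain the other; hence $T_a^{1/2}$ and $T_b^{1/2}$ admit representatives with disjoint support and therefore commute.

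For the braiding statement, the easy direction proceeds by a model computation. If $i(a, b) = 2$ for pants curves $a, b$, then a closed regular neighborhood of $a \cup b$ is a four-holed sphere $R \subset S$ (essentially embedded because both $a, b$ are pants curves), and the half-twists $T_a^{1/2}, T_b^{1/2}$ can be realized with support in $R$. Inside $\MCG(R)$ the two curves correspond to standard braid-type generators (visible via the presentation $\MCG(S_{0,4}) \cong K_4 \rtimes \PGL(2,\ZZ)$), and the braid relation is a direct verification.

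I expect the hard direction of the braiding statement to be the main obstacle: showing that $T_a^{1/2} T_b^{1/2} T_a^{1/2} = T_b^{1/2} T_a^{1/2} T_b^{1/2}$ forces $i(a,b) = 2$. Squaring the braid relation does \emph{not} give a braid relation for $T_a$ and $T_b$, so the reduction used for commutation fails. The plan is instead to exploit the standard consequence $(fg)\, f\, (fg)^{-1} = g$ of the braid relation: applied with $f = T_a^{1/2}$ and $g = T_b^{1/2}$, this says $T_a^{1/2} T_b^{1/2}$ conjugates $T_a^{1/2}$ to $T_b^{1/2}$, and therefore sends the isotopy class $a$ to $b$. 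Rearranging, one obtains the equation $T_b^{1/2}(a) = T_a^{-1/2}(b)$. Since a half-twist fixes its core curve setwise, both sides have the same geometric intersection with $a$ and with $b$; combined with the standard formula describing the effect of a (half-)twist on a transverse curve, this equation forces a precise count on $i(a,b)$. Running the case analysis (exactly as in McCarthy's argument for Lemma~\ref{Lem:Twist}, but now tracking the half-twist's effect on the two arcs of $b \cap P_a$) rules out $i(a,b) \in \{0, 1\}$ and any $i(a,b) \geq 3$, leaving $i(a,b) = 2$ as the only possibility.
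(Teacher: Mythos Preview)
The paper does not actually prove this lemma: it simply asserts that ``the proof generalizes to half-twists along pants curves'' and places a \qed. So your outline is doing strictly more than the paper does, and it is in the spirit of what the paper intends --- adapting the McCarthy/Ivanov--McCarthy argument for \reflem{Twist}.

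Two comments on the details. First, in the easy direction of the braid statement you place the half-twists inside a regular neighborhood $R$ of $a\cup b$. But a half-twist $T_a^{1/2}$ is supported in the pair of pants $P_a$, which is not contained in a thin neighborhood of $a\cup b$. The correct ambient four-holed sphere is $P_a\cup P_b$; one checks (using that $i(a,b)=2$ forces $P_a$ and $P_b$ to share exactly one component of $\bdy S$) that $P_a\cup P_b$ is indeed a copy of $S_{0,4}$, and then your model computation goes through there.

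Second, the hard direction is where the real content lies, and your sketch stops exactly at the point where the Dehn twist argument has to be genuinely modified. For full twists one uses the clean identity $i(T_b(a),a)=i(a,b)^2$, which together with $T_b(a)=T_a^{-1}(b)$ forces $n^2=n$. For half-twists there is no equally clean formula for $i(T_b^{1/2}(a),a)$ in terms of $n=i(a,b)$ alone; the answer depends on how the arcs of $a\cap P_b$ are distributed around the two components of $\bdy S$ in $P_b$. Your phrase ``running the case analysis\ldots rules out $i(a,b)\ge 3$'' hides this, and as written the argument is incomplete. (Note also that $i(a,b)$ is automatically even since pants curves are separating, so $n=1$ and $n=3$ never arise.) One way to finish is to use the action on $\pi_0(\bdy S)$ to reduce to the situation where $P_a,P_b$ share exactly one boundary component of $S$, and then analyze arcs of $b\cap P_a$ in the pair of pants $P_a$ directly; but this still requires work beyond what you have written. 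Since the paper omits all of this as well, your sketch is not wrong so much as unfinished at the same point the paper is.
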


Now, closely following Ivanov~\cite[Section 2]{Ivanov88}, we essay an
algebraic characterization of twists on nonseparating curves inside of
$\MCG(S)$.
Define a subgroup $H < \MCG(S)$ to be an {\em algebraic twist
subgroup} if it has the following properties.
\begin{itemize}
\item
$H = \langle g_1, \ldots, g_k \rangle$  is a free abelian group of 
rank $k = \xi(S)$, 
\item 
for all $i, j$ the generators $g_i, g_j$ are conjugate inside of $\MCG(S)$,
\item
for all $i$ and $n$ the center of the centralizer, $Z(C(g_i^n))$, is
cyclic, and
\item
for all $i$, the generator $g_i$ is not a proper power in $C(H)$.
\end{itemize}

\noindent
We have:

\begin{theorem}
\label{Thm:CharTwist}
Suppose that $\xi(S) \geq 2$ and $\MCG(S)$ has trivial center.
Suppose that $H = \langle g_1, \ldots, g_k \rangle$ is an algebraic
twist group.  Then the elements $g_i$ are all either twists on
nonseparating curves or are all half-twists on pants curves.
Furthermore, the underlying curves for the $g_i$ form a pants
decomposition of $S$.
\end{theorem}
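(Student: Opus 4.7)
The plan is to follow Ivanov's centralizer-based strategy: use the cyclic-center and primitivity conditions to show each $g_i$ is supported on a single curve, then use conjugacy and commutativity to package the curves into a pants decomposition.

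First I would pass to a sufficiently large power $g_i^N$ which is pure in the Birman--Lubotzky--McCarthy sense: it preserves its canonical reduction system $\sigma_i$ componentwise, and on each component of $S \setminus \sigma_i$ acts as either the identity or a pseudo-Anosov. Write $Y_1, \ldots, Y_l$ for the pseudo-Anosov components. A standard computation of $C(g_i^N)$ then identifies its center (up to finite index) as the abelian group generated by the Dehn twists $\{ T_c : c \in \sigma_i \}$ together with the pseudo-Anosov restrictions $\{ g_i^N|_{Y_j} \}$; any extra central element would have to project to the center of $\MCG(S)$ under the restriction to an identity component, and the hypothesis that $\MCG(S)$ has trivial center excludes this.

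Requiring $Z(C(g_i^N))$ cyclic then forces $|\sigma_i| + l \leq 1$. The case $l = 1$, $\sigma_i = \emptyset$ is excluded because a pseudo-Anosov on all of $S$ has virtually cyclic centralizer, contradicting the rank $\xi(S) \geq 2$ of $H \subset C(g_i)$; the case $l = 0$, $\sigma_i = \emptyset$ is excluded since $g_i$ has infinite order. Hence $l = 0$ and $|\sigma_i| = 1$, so $g_i^N = T_{c_i}^m$ for a single essential curve $c_i$ and some $m \neq 0$. The classification of roots of powers of Dehn twists in $\MCG(S)$ then shows that $g_i$ itself is either a power of $T_{c_i}$ or, when $c_i$ bounds a pair of pants, a power of the half-twist $T_{c_i}^{1/2}$. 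The non-proper-power hypothesis in $C(H)$ pins $g_i$ down to $T_{c_i}$ or $T_{c_i}^{1/2}$.

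The remainder is topological. Mutual conjugacy of the $g_i$ forces the $c_i$ to share a common topological type, and forces all $g_i$ to be simultaneously Dehn twists or simultaneously half-twists. Pairwise commutativity together with Lemmas~\ref{Lem:Twist} and~\ref{Lem:HalfTwist} forces the $c_i$ to be pairwise disjoint. A system of $k = \xi(S) = 3\g - 3 + \boundary$ disjoint essential non-peripheral simple closed curves on $S$ is a pants decomposition. Finally, a pants decomposition whose curves share a common topological type must consist either of nonseparating curves or of pants curves, since a pants decomposition by separating non-pants curves would require its dual graph to be a tree with every vertex trivalent and no leaf edges, which is only possible in genus zero; this matches the two cases in the conclusion.

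The main obstacle is the centralizer calculation: one must carefully rule out extra central elements such as symmetries permuting the pseudo-Anosov pieces of $g_i^N$, hyperelliptic involutions on small components, or rotations on symmetric subsurfaces. A secondary subtlety is the classification of roots of $T_c^m$ in $\MCG(S)$ --- these are exhausted by powers of $T_c$ and, for pants curves, powers of $T_c^{1/2}$, together with the fake-hyperelliptic ambiguity noted in the appendix --- which is what makes the non-proper-power condition suffice to recover a primitive twist or half-twist.
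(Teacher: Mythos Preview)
Your outline has the right skeleton, but there are two genuine gaps, and the second one is fatal as stated.

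\medskip
\textbf{The centralizer rank count is not correct.} You assert that $Z(C(g_i^N))$ is, up to finite index, the free abelian group on $\{T_c : c \in \sigma_i\}$ together with the pseudo-Anosov restrictions. But elements of $C(g_i^N)$ only preserve $\sigma_i$ \emph{setwise}; they may permute the curves. If, say, $g_i^N = T_{c_1}T_{c_2}$ and some $h \in C(g_i^N)$ swaps $c_1$ and $c_2$, then $T_{c_1} \notin Z(C(g_i^N))$, and in fact $Z(C(g_i^N)) = \langle T_{c_1}T_{c_2}\rangle$ is cyclic even though $|\sigma_i| = 2$. So cyclicity of $Z(C(g_i^N))$ alone does not force $|\sigma_i| + l \le 1$. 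The paper avoids this by invoking Birman--Lubotzky--McCarthy with the full max-rank hypothesis on $H$: a rank-$\xi(S)$ abelian subgroup is, after a uniform power, supported on a pants decomposition with each generator either a twist power or a pseudo-Anosov on a single complexity-one piece. You never use the rank hypothesis at this step, and that is where it is needed.

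\medskip
\textbf{The root classification is false.} You claim that roots of $T_c^m$ in $\MCG(S)$ are exhausted by powers of $T_c$ and, for pants curves, half-twists. This is not true: Dehn twists on nonseparating curves in surfaces of genus at least two admit exotic roots (McCullough--Rajeevsarathy, Margalit--Schleimer). So from $g_i^N = T_{c_i}^m$ you cannot conclude that $g_i$ is a power of $T_{c_i}$ or $T_{c_i}^{1/2}$. The primitivity hypothesis is only in $C(H)$, so to use it you must first know what $C(H)$ looks like. This is exactly the step the paper spends most of its effort on: using the action on $\PML(S)$ it shows every $h \in C(H)$ fixes each $b_i$, then analyzes $h$ pants-by-pants to conclude that $C(H)$ consists precisely of products of twists (or half-twists) on the $b_i$. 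Only then does primitivity in $C(H)$ pin down $g_i$. Your argument skips this computation of $C(H)$ and substitutes an incorrect global statement about roots; that is the missing idea.
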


\begin{proof}
By work of Birman-Lubotzky-McCarthy~\cite{BirmanEtAl83} (see
also~\cite{McCarthy86}) we know that there is a power $m$ so that each
element $f_i = g_i^m$ is either a power of Dehn twist or a
pseudo-Anosov supported in a subsurface of complexity one.  Suppose
that $f_i$ is pseudo-Anosov with support in $Y \subset S$; then the
center of the centralizer $Z(C(f_i))$ contains the group generated by
$f_i$ and all twists along curves in $\bdy Y$.  However, this group is
not cyclic, a contradiction. 

Deduce instead that each $f_i$ is the power of a Dehn twist.  Let
$b_i$ be the support of $f_i$.  Since the $f_i$ commute with each
other and are not equal it follows that the $b_i$ are disjoint and are
not isotopic.  Thus the $b_i$ form a pants decomposition.  Since the
$g_i$ are conjugate the same holds for the $f_i$.  Thus all the $b_i$
have the same topological type.

If $S$ has positive genus then it follows that all of the $b_i$ are
nonseparating.  If $S$ is planar then it follows that $S = S_{0,5}$ or
$S_{0,6}$ and all of the $b_i$ are pants curves.

Now fix attention on any $h \in C(H)$, the centralizer of $H$ in
$\MCG(S)$.  We will show that $h$ preserves each curve $b_i$.  First
recall that, for any index $i$ and any $n \in \ZZ$, the element $h$
commutes with $f_i^n$.  Let $\ell \in\PMF(S)$ be any filling
lamination.  We have
\begin{gather*}
f_i^n(\ell) \to b_i \quad\text{as}\quad n \to \infty,\\
\tag*{so}
h \circ f_i^n(\ell) \to h(b_i) \quad\text{and}\quad
f_i^n \circ h (\ell) \to b_i.
\end{gather*}
It follows that $h(b_i) = b_i$ for all $i$.  

Fix attention on any $b_\ell \in \{ b_i \}$.  Suppose first that the
two sides of $b_\ell$ lie in a single pair of pants, $P$.  If $\bdy P$
meets $b_\ell$ and no other pants curve then $S = S_{1,1}$, a
contradiction.  If $\bdy P$ meets only $b_\ell$ and $b_k$ then
$b_\ell$ is nonseparating and $b_k$ is separating, a contradiction.
We deduce that $b_\ell$ meets two pants, $P$ and $P'$.  Now, if $h$
interchanges $P$ and $P'$ then $S$ is in fact the union of $P$ and
$P'$; as $\xi(S) > 1$ it follows that $S = S_{1,2}$ or $S_2$.
However, in both cases the mapping class group has non-trivial center,
contrary to hypothesis.

We next consider the possibility that that $h$ fixes $P$ setwise.  So
$h|P$ is an element of $\MCG(P)$.  If $h|P$ is orientation reversing
then so is $h$; thus $h$ conjugates $f_\ell$ to $f_\ell^{-1}$, a
contradiction.  If $h|P$ permutes the components of $\bdy P \setminus
b_\ell$ then either $b_\ell$ cuts off a copy of $S_{0, 3}$ or
$S_{1,1}$ from $S$; in the latter case we have $b_i$'s of differing
types, a contradiction.  

To summarize: $h$ is orientation preserving, after an isotopy $h$
preserves each of the $b_i$, and $h$ preserves every component of $S
\setminus \{ b_i \}$.  Furthermore, when restricted to any such
component $P$, the element $h$ is either isotopic to the identity or
to a half twist.  The latter occurs only when $P \cap \bdy S =
\delta_+ \cup \delta_-$, with $h(\delta_\pm) = \delta_\mp$.

So, if the $b_i$ are nonseparating then $h$ is isotopic to the
identity map on $S \setminus \{ b_i \}$.  It follow that $h$ is a
product of Dehn twists on the $b_i$.  In particular this holds for
each of the $g_i$ and we deduce that $T_{b_i}$, the Dehn twist on
$b_i$, is an element of $C(H)$.  Now, since $Z(C(g_i)) = \ZZ$ we
deduce that the support of $g_i$ is a single curve.  By the above
$g_i$ is a power of $T_{b_i}$; since $g_i$ is primitive in $C(H)$ we
find $g_i = T_{b_i}$, as desired.

The other possibility is that the $b_i$ are all pants curves.  It
follows that $S = S_{0,5}$ or $S_{0,6}$.  Here $h$ is the identity on
the unique pants component of $S \setminus \{ b_i \}$ meeting fewer
than two components of $\bdy S$.  On the others, $h$ is either the
identity or of order two.  So $h$ is a product of half-twists on the
$\{ b_i \}$.  As in the previous paragraph, this implies that $g_i =
T_a^{1/2}$.
\end{proof}

\subsection*{Bracelets}
We now recall a pretty definition from~\cite{BehrstockMargalit06}.
Suppose $g$ is a twist or a half-twist.  A {\em bracelet} around $g$
is a set of mapping classes $\{ f_i \}$ so that
\begin{itemize}
\item
every $f_i$ braids with $g$ (and so is conjugate to $g$),
\item
if $i \neq j$ then $f_i \neq f_j$ and $[f_i, f_j]=1$, and 
\item
no $f_i$ is equal to $g$.
\end{itemize}

\noindent
Note that bracelets are algebraically defined.  The \emph{bracelet
number} of $g$ is the maximal size of a bracelet around $g$.

\begin{claim}
A half-twist on a pants curve has bracelet number at most two. 
\end{claim}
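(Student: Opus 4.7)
The plan is to use \reflem{HalfTwist} to encode the bracelet condition geometrically, then count using the boundary components of $S$. Suppose $g = T_a^{1/2}$ with pants side $P$, and write $\bdy P = a \sqcup \delta^1 \sqcup \delta^2$ with $\delta^1, \delta^2 \in \bdy S$. Any bracelet around $g$ consists of half-twists $f_i = T_{b_i}^{1/2}$ on pants curves $b_i$, where by \reflem{HalfTwist}: each $b_i$ meets $a$ in exactly two points (from braiding), the curves $b_i$ are pairwise disjoint (from commuting), and the $f_i$ are pairwise distinct. Write $P_i$ for the pants side of $b_i$, so $\bdy P_i = b_i \sqcup \delta_i^1 \sqcup \delta_i^2$ with $\delta_i^1, \delta_i^2 \in \bdy S$.

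The first step is to verify that for $i \neq j$, the subsets $\{\delta_i^1, \delta_i^2\}$ and $\{\delta_j^1, \delta_j^2\}$ of $\bdy S$ are disjoint. The point is that $b_j$ cannot lie in the pants $P_i$: if it did, then $b_j$ would be an essential simple closed curve in a pair of pants, hence peripheral, hence parallel either to $b_i$ (contradicting distinctness of the underlying curves) or to one of $\delta_i^1, \delta_i^2$ (contradicting that $b_j$ is a pants curve and therefore non-peripheral). The same argument excludes $b_i \subset P_j$ and the nested containments $P_i \subset P_j$ or $P_j \subset P_i$. It follows that $P_i$ lies on the non-pants side of $b_j$ and vice versa; in particular the components of $\bdy S$ they contain are disjoint.

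The second step identifies, for each $i$, a specific element $\delta_i^P \in \{\delta_i^1, \delta_i^2\} \cap \{\delta^1, \delta^2\}$. Because $i(a, b_i) = 2$ and is realized, the intersection $\alpha_i := a \cap P_i$ is a single arc in $P_i$ with both endpoints on $b_i$, essential in $P_i$ (else the intersection could be reduced). A standard fact about a pair of pants is that any essential arc from a boundary component to itself must separate the other two boundary components, and cuts the pants into two annuli, each containing one of the other two boundary circles. Exactly one of these two annuli lies on the $P$-side of $\alpha_i$; since it lies in $P_i$ and does not recross $a$, it is contained in $P$. The boundary circle of $S$ inside this annulus, call it $\delta_i^P$, therefore lies in $P$. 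Since $\bdy S \cap P \subset \bdy P \cap \bdy S = \{\delta^1, \delta^2\}$, we conclude $\delta_i^P \in \{\delta^1, \delta^2\}$.

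The final step is the count: the assignment $i \mapsto \delta_i^P$ is injective by the disjointness established in the first step, and takes values in the two-element set $\{\delta^1, \delta^2\}$. Hence the bracelet contains at most two half-twists. I expect the main technical nuisance to lie in the first step, where one must carefully rule out the various ways two disjoint pants curves could have overlapping pants subsurfaces; this is repeatedly excluded by the fact that a pair of pants contains no essential non-peripheral simple closed curves.
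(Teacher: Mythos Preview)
Your argument is correct and follows essentially the same route as the paper's proof: both use \reflem{HalfTwist} to translate braiding and commuting into the geometric statements that each $P_i$ shares a component of $\bdy S$ with $P$ while the $P_i$ are pairwise disjoint, and then conclude by counting against the two elements of $\{\delta^1,\delta^2\}$. The paper compresses your Steps~1 and~2 into one-line assertions (``the pants they cut off share a curve of $\bdy S$'' and ``the pants are disjoint''), whereas you spell out why disjoint pants curves force disjoint pants subsurfaces and why the essential arc $\alpha_i$ traps one of $\delta_i^1,\delta_i^2$ inside $P$; but the underlying idea is identical.
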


\begin{proof}
If two half-twists braid, then they intersect twice
(\reflem{HalfTwist}).  Thus, the pants they cut off share a curve of
$\bdy S$. If two half-twists commute, the pants are disjoint (again,
\reflem{HalfTwist}). Therefore, there are at most two commuting
half-twists braiding with a given half-twist.
\end{proof}

On the other hand: 

\begin{claim}
Suppose that $S \neq \TT$.  Then a twist on a nonseparating curve has
bracelet number $2\g - 2 + \boundary$.
\end{claim}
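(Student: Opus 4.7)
My plan is to prove the claim in two parts—an upper bound via Euler characteristic and a matching lower bound by explicit construction—after first translating the algebraic definition into a topological one.

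By \reflem{Twist}, a bracelet around $T_a$ (with $a$ nonseparating) is a collection of Dehn twists $\{T_{b_1},\ldots,T_{b_N}\}$ in which the $b_i$ are pairwise disjoint, pairwise non-isotopic essential simple closed curves, each distinct from $a$ and meeting $a$ transversely in exactly one point; in particular each $b_i$ is itself nonseparating. So the bracelet number of $T_a$ equals the largest such $N$. For the upper bound I will cut $S$ along $a$ to obtain $S' = S_{\g-1,\,\boundary+2}$ with two new boundary circles $\partial_+,\partial_-$. Each $b_i$ becomes a properly embedded arc $\alpha_i \subset S'$ from $\partial_+$ to $\partial_-$; these arcs are pairwise disjoint, automatically essential (their endpoints lie on distinct boundary components), and pairwise non-isotopic, since an isotopy rectangle between $\alpha_i$ and $\alpha_j$ in $S'$ would glue along $a$ to an annulus between $b_i$ and $b_j$ in $S$. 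Cutting $S'$ along the $\alpha_i$ produces a surface $S''$ with $\chi(S'') = \chi(S') + N = \chi(S) + N$. I then argue that no component of $S''$ is a disk: the boundary cycle of such a disk component must alternate between cut sides and pieces of $\partial S'$, and because each cut side has one endpoint on $\partial_+$ and one on $\partial_-$, the number of cut sides is even and at least two. Extracting an innermost rectangle from this cycle yields two distinct arcs $\alpha_i, \alpha_j$ cobounding a rectangle with pieces of $\partial_\pm$, hence isotopic in $S'$, contradicting non-isotopy. So $\chi(S'') \leq 0$, giving $N \leq -\chi(S) = 2\g - 2 + \boundary$.

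For the matching lower bound I will produce $N = 2\g - 2 + \boundary$ pairwise disjoint, pairwise non-isotopic essential arcs from $\partial_+$ to $\partial_-$ in $S_{\g-1,\,\boundary+2}$, by induction on $2\g + \boundary$ with two base cases: $S = S_{1,1}$, where the unique isotopy class of arcs in the pair of pants $S_{0,3}$ between two fixed boundary circles provides the single required arc; and $S = S_2$, where $S' = S_{1,2}$ admits two disjoint non-isotopic arcs between its two boundaries (one ``straight'' and one going once around the torus handle). The inductive steps are then: passing from $S_{\g,\boundary}$ to $S_{\g,\boundary+1}$ removes a small disk of $S'$ near an existing arc, and I route one new arc on the opposite side of the new boundary hole; and passing from $S_{\g,\boundary}$ to $S_{\g+1,\boundary}$ attaches a handle to $S'$, through which I route two new arcs. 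In each case the newly added topological feature provides the obstruction needed to verify disjointness and non-isotopy of the new arcs with the old ones.

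The principal difficulty is the no-disk claim in the upper bound, which requires the standard but somewhat finicky combinatorial analysis of the boundary cycle of a hypothetical disk component of $S''$; the algebraic translation and the lower-bound construction are otherwise routine.
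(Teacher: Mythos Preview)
Your approach differs substantially from the paper's. You cut $S$ along $a$ and bound the number of arcs in $S' = S_{\g-1,\boundary+2}$ via an Euler-characteristic inequality, then give a separate inductive construction for the lower bound. The paper instead cuts $S$ along the $b_i$ and uses \emph{maximality} of the bracelet: in each complementary piece $X_j$ the remaining arcs $a_j = a \cap X_j$ must admit no further curve meeting them once, which forces every $X_j$ to be a pair of pants (carrying one arc of $a$) or an $S_{0,4}$ (carrying two). A single Euler-characteristic identity then yields the exact value $2\g-2+\boundary$ for every maximal bracelet simultaneously, handling both bounds at once and avoiding any explicit construction.

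Your upper bound has a genuine gap in the ``no disk'' step. First, the assertion that an isotopy rectangle between $\alpha_i$ and $\alpha_j$ in $S'$ ``would glue along $a$ to an annulus between $b_i$ and $b_j$'' is not literally correct: the two $\partial_\pm$-sides of the rectangle need not map to the \emph{same} arc of $a$ under the regluing. The desired conclusion $b_i \simeq b_j$ is still true --- one shows $b_j \simeq T_a^{m}(b_i)$ for some $m$, and then $i(b_i,T_a^m(b_i))=|m|$ together with $i(b_i,b_j)=0$ forces $m=0$ --- but this needs to be argued. Second, and more seriously, your ``innermost rectangle'' extraction is undefined when the hypothetical disk component $D$ has $2k \geq 4$ arc-sides on its boundary. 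In that case $D$ is a $4k$-gon whose sides alternate between arc-sides and $\partial_\pm$-pieces; there is no evident sub-rectangle of $D$ bounded by two full arcs and two $\partial_\pm$-pieces, and two consecutive arc-sides need not come from arcs that cobound any rectangle in $S'$ at all. You give no argument excluding such hexagonal or larger disk regions. The paper's maximality argument sidesteps this difficulty entirely.
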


\begin{proof}
Suppose that $a$ is a non-separating curve with associated Dehn twist.
Let $\{ b_i \}$ be curves underlying the twists in the bracelet.  Each
$b_i$ meets $a$ exactly once (\reflem{Twist}).  Also, each $b_i$ is
disjoint from the others and not parallel to any of the others (again,
\reflem{Twist}).  Thus the $b_i$ cut $S$ into a collection of surfaces
$\{ X_j \}$.  For each $j$ let $a_j = a \cap X_j$ be the remains of
$a$ in $X_j$.  Note that $a_j \neq \emptyset$.  Each component of
$\bdy X_j$ either meets exactly one endpoint of exactly one arc of
$a_j$ or is a boundary component of $S$.  Now:
\begin{itemize}
\item
if $X_j$ has genus,
\item
if $|\bdy X_j| > 4$, or
\item
if $|\bdy X_j| = 4$ and $a_j$ is a single arc,
\end{itemize}
then there is a non-peripherial curve in $X_j$ meeting $a_j$
transversely in a single point.  However, this contradicts the
maximality of $\{ b_i \}$.  Thus every $X_j$ is planar and has at most
four boundary components.  If $X_j$ is an annulus then $S$ is a torus,
violating our assumption.  If $X_j $ is a pants then $a_j$ is a single
arc.  If $X_j = S_{0,4}$ then $a_j$ is a pair of arcs.  An Euler
characteristic computation finishes the proof.
\end{proof}

\subsection*{Proving the theorem}

Suppose now that $\xi(S) \geq 4$.  By \refthm{CharTwist} any basis
element of any algebraic twist group is a Dehn twist on a
nonseparating curve.  Here the bracelet number is $2\g - 2 +
\boundary$.  Thus $\xi(S)$ minus the bracelet number is $\g - 1$.
This, together with the fact that $\xi(S)$ agrees with the algebraic
rank, gives an algebraic characterization of $\g$.

When $\xi(S) \leq 2$ the discussion of low complexity surfaces proves
the theorem.  The same is true when $\xi(S) = 3$ and $\MCG(S)$ has
nontrivial center.

The only surfaces remaining are $S_{0,6}$ and $S_{1,3}$.  In
$\MCG(S_{0,6})$ every basis element of every algebraic twist group has
bracelet number two.  In $\MCG(S_{1,3})$ every basis element of every
algebraic twist group has bracelet number three.  So \refthm{Ivanov}
is proved. \qed

\bibliographystyle{plain}
\bibliography{bibfile}

\def\cprime{$'$}
\begin{thebibliography}{10}

\bibitem{BehrstockEtAl08}
Jason Behrstock, Bruce Kleiner, Yair Minsky, and Lee Mosher.
\newblock Geometry and rigidity of mapping class groups.
\newblock \href{http://arxiv.org/abs/0801.2006}{arXiv:0801.2006}.

\bibitem{BehrstockMargalit06}
Jason Behrstock and Dan Margalit.
\newblock Curve complexes and finite index subgroups of mapping class groups.
\newblock {\em Geom. Dedicata}, 118:71--85, 2006.
\newblock \href{http://arxiv.org/abs/math/0504328}{arXiv:math/0504328}.

\bibitem{Behrstock06}
Jason~A. Behrstock.
\newblock Asymptotic geometry of the mapping class group and
  {T}eich{\-}m\"uller space.
\newblock {\em Geom. Topol.}, 10:1523--1578 (electronic), 2006.
\newblock \href{http://arxiv.org/abs/math/0502367}{arXiv:math/0502367}.

\bibitem{BellFujiwara08}
Gregory~C. Bell and Koji Fujiwara.
\newblock The asymptotic dimension of a curve graph is finite.
\newblock {\em J. Lond. Math. Soc. (2)}, 77(1):33--50, 2008.
\newblock \href{http://arxiv.org/abs/math/0509216}{arXiv:math/0509216}.

\bibitem{BirmanEtAl83}
Joan~S. Birman, Alex Lubotzky, and John McCarthy.
\newblock Abelian and solvable subgroups of the mapping class groups.
\newblock {\em Duke Math. J.}, 50(4):1107--1120, 1983.
\newblock
  \href{http://www.math.columbia.edu/~jb/papers.html}{http://www.math.columbia%
.edu/{$\sim$}jb/papers.html}.

\bibitem{Bridson99}
Martin~R. Bridson and Andr{\'e} Haefliger.
\newblock {\em Metric spaces of non-positive curvature}.
\newblock Springer-Verlag, Berlin, 1999.

\bibitem{FarbMargalit10}
Benson Farb and Dan Margalit.
\newblock A primer on mapping class groups, 2010.
\newblock
  \href{http://www.math.utah.edu/~margalit/primer/}{http://www.math.utah.edu/{%
$\sim$}margalit/primer/}.

\bibitem{Gabai09}
David Gabai.
\newblock Almost filling laminations and the connectivity of ending lamination
  space.
\newblock {\em Geom. Topol.}, 13(2):1017--1041, 2009.
\newblock \href{http://arxiv.org/abs/0808.2080}{arXiv:0808.2080}.

\bibitem{Gromov87}
Mikhael Gromov.
\newblock Hyperbolic groups.
\newblock In {\em Essays in group theory}, pages 75--263. Springer, New York,
  1987.

\bibitem{Hamenstaedt05b}
Ursula Hamenstaedt.
\newblock {Geometry of the mapping class groups III: Quasi-isometric rigidity}.
\newblock \href{http://arxiv.org/abs/math/0512429}{arXiv:math/0512429}.

\bibitem{Harvey81}
Willam~J. Harvey.
\newblock Boundary structure of the modular group.
\newblock In {\em Riemann surfaces and related topics: Proceedings of the 1978
  Stony Brook Conference (State Univ. New York, Stony Brook, N.Y., 1978)},
  pages 245--251, Princeton, N.J., 1981. Princeton Univ. Press.

\bibitem{Ivanov88}
N.~V. Ivanov.
\newblock Automorphisms of {T}eichm\"uller modular groups.
\newblock In {\em Topology and geometry---Rohlin Seminar}, volume 1346 of {\em
  Lecture Notes in Math.}, pages 199--270. Springer, Berlin, 1988.

\bibitem{Ivanov02}
Nikolai~V. Ivanov.
\newblock Mapping class groups.
\newblock In {\em Handbook of geometric topology}, pages 523--633.
  North-Holland, Amsterdam, 2002.

\bibitem{Ivanov07}
Nikolai~V. Ivanov.
\newblock 2007.
\newblock Personal communication.

\bibitem{IvanovMcCarthy95}
Nikolai~V. Ivanov and John~D. McCarthy.
\newblock On injective homomorphisms between {T}eichm\"uller modular groups.
\newblock 1995.
\newblock \href{http://www.mth.msu.edu/~ivanov/i.ps}
  {http://www.mth.msu.edu/{$\sim$}ivanov/i.ps}.

\bibitem{IvanovMcCarthy99}
Nikolai~V. Ivanov and John~D. McCarthy.
\newblock On injective homomorphisms between {T}eichm\"uller modular groups.
  {I}.
\newblock {\em Invent. Math.}, 135(2):425--486, 1999.

\bibitem{Kapovich01}
Michael Kapovich.
\newblock {\em Hyperbolic manifolds and discrete groups}.
\newblock Birkh\"auser Boston Inc., Boston, MA, 2001.

\bibitem{Klarreich99}
Erica Klarreich.
\newblock The boundary at infinity of the curve complex and the relative
  {T}eichm\"{u}ller space.
\newblock
  \href{http://nasw.org/users/klarreich/research.htm}{http://nasw.org/users/kl%
arreich/research.htm}.

\bibitem{Korkmaz99}
Mustafa Korkmaz.
\newblock Automorphisms of complexes of curves on punctured spheres and on
  punctured tori.
\newblock {\em Topology Appl.}, 95(2):85--111, 1999.

\bibitem{LeiningerSchleimer09}
Christopher~J. Leininger and Saul Schleimer.
\newblock Connectivity of the space of ending laminations.
\newblock {\em Duke Math. J.}, 150(3):533--575, 2009.
\newblock \href{http://arxiv.org/abs/0801.3058}{arXiv:0801.3058}.

\bibitem{Luo00}
Feng Luo.
\newblock Automorphisms of the complex of curves.
\newblock {\em Topology}, 39(2):283--298, 2000.
\newblock \href{http://arxiv.org/abs/math/9904020}{arXiv:math/9904020}.

\bibitem{MasurMinsky99}
Howard~A. Masur and Yair~N. Minsky.
\newblock Geometry of the complex of curves. {I}. {H}yperbolicity.
\newblock {\em Invent. Math.}, 138(1):103--149, 1999.
\newblock \href{http://arxiv.org/abs/math/9804098}{arXiv:math/9804098}.

\bibitem{MasurMinsky00}
Howard~A. Masur and Yair~N. Minsky.
\newblock Geometry of the complex of curves. {II}. {H}ierarchical structure.
\newblock {\em Geom. Funct. Anal.}, 10(4):902--974, 2000.
\newblock \href{http://arxiv.org/abs/math/9807150}{arXiv:math/9807150}.

\bibitem{McCarthy86}
John~D. McCarthy.
\newblock Automorphisms of surface mapping class groups. {A} recent theorem of
  {N}. {I}vanov.
\newblock {\em Invent. Math.}, 84(1):49--71, 1986.

\bibitem{Minsky03}
Yair~N. Minsky.
\newblock {The classification of Kleinian surface groups, I: Models and
  bounds}.
\newblock \href{http://arxiv.org/abs/math/0302208}{arXiv:math/0302208}.

\bibitem{RafiSchleimer09}
Kasra Rafi and Saul Schleimer.
\newblock Covers and the curve complex.
\newblock {\em Geom. Topol.}, 13(4):2141--2162, 2009.
\newblock \href{http://arxiv.org/abs/math/0701719}{arXiv:math/0701719}.

\bibitem{Schleimer06c}
Saul Schleimer.
\newblock {The end of the curve complex}.
\newblock To appear.
  \href{http://arxiv.org/abs/math/0608505}{arXiv:math/0608505}.

\bibitem{Stillwell93}
John Stillwell.
\newblock {\em Classical topology and combinatorial group theory}, volume~72 of
  {\em Graduate Texts in Mathematics}.
\newblock Springer-Verlag, New York, second edition, 1993.

\end{thebibliography}
\end{document}